\theoremstyle:=definition,remark,plain\do{%
        \expandafter\g@addto@macro\csname th@\theoremstyle\endcsname{%
            \addtolength\thm@preskip\parskip
            }%
        }
\newtheorem*{rep@theorem}{\rep@title}
\newcommand{\newreptheorem}[2]{%
\newenvironment{rep#1}[1]{%
 \def\rep@title{#2 \ref{##1}}%
 \begin{rep@theorem}}%
 {\end{rep@theorem}}}
\newtheorem{Definition}[equation]{Definition}
\newcounter{mark}
\newtheorem{Proposition}[equation]{Proposition}
\newtheorem{Lemma}[equation]{Lemma}
\newtheorem{Remark}[equation]{Remark}
\newtheorem{Problem}[equation]{Problem}
\newtheorem{Question}[equation]{Question}
\newtheorem{Coupling}[mark]{Coupling}
\theoremstyle{definition}
\newtheorem{Example}[equation]{Example}
\definecolor{darkred}{rgb}{0.7,0,0} 
\newcommand{\darkred}{\color{darkred}} 
\newcommand{\defn}[1]{\emph{\darkred #1}} 
\newcommand{\un}{\pi}
\newcommand{\E}[1]{\mathbb{E}\!\left(#1\right)}
\newcommand{\eps}{\epsilon}
\newcommand{\f}[2]{\frac{#1}{#2}}
\renewcommand{\P}{\mathcal{P}}
\newcommand{\G}{\mathcal{G}}
\DeclareMathOperator{\Var}{Var}
\newcommand{\tme}{t_{\text{mix}}(\epsilon)}
\newcommand{\M}{\mathcal{M}}
\newcommand{\figtopfourhitting}{\begin{tikzpicture}[->,auto,node distance=2.5cm, main node/.style={circle,draw,font=\Large\bfseries}]

  \node[main node] (0) {0};
  \node[main node] (1) [right of=0] {1};
	\node[main node] (2) [right of=1] {2};
	\node[main node] (3) [right of=2] {3};
	\node[main node] (4) [right of=3] {4};

  \path
    (0) edge node {$\frac{n}{n}$} (1)
    (1) edge [loop below] node {$\frac{1}{n}$} (1)
        edge node {$\frac{n-1}{n}$} (2)
    (2) edge [loop below] node {$\frac{2}{n}$} (2)
        edge node {$\frac{n-2}{n}$} (3)
    (3) edge [loop below] node {$\frac{3}{n}$} (3)
        edge node {$\frac{n-3}{n}$} (4);
\end{tikzpicture}
}
\newcommand{\figafteronehitting}{\begin{tikzpicture}[->,auto,node distance=2.5cm,main node/.style={circle,draw,font=\Large\bfseries}]

  \node[main node] (10) {1,0};
	\node[main node] (20) [right of=10] {2,0};
	\node[main node] (30) [right of=20] {3,0};
	\node[main node,fill=blue!30] (40) [right of=30] {4,0};
	\node[main node,fill=blue!30] (21) [below of=20] {2,1};
	\node[main node,fill=blue!30] (31) [below of=30] {3,1};
	\node[main node,fill=blue!30] (41) [below of=40] {4,1};
	\node[main node,fill=blue!30] (32) [below of=31] {3,2};
	\node[main node,fill=blue!30] (42) [below of=41] {4,2};
	\node[main node,fill=blue!30] (43) [below of=42] {4,3};
	\node[main node, node distance=2.5cm*1.41] (3) [below left of=43] {3};
	\node[main node] (2) [left of=3] {2};
	\node[main node] (1) [left of=2] {1};
	\node[main node] (0) [left of=1] {0};

	\path[every node/.style = near start]
    (0) edge node {$\frac{3}{4}$} (1) 
				edge node {$\frac{1}{4}$} (10)
		(1) edge node {$\frac{2}{4}$} (2)
		    edge node {$\frac{1}{4}$} (21)
		(2) edge node {$\frac{1}{4}$} (3)
		    edge node {$\frac{1}{4}$} (32)
		(3) edge node {$\frac{1}{4}$} (43)
		(10) edge node {$\frac{3}{4}$} (20)
		(20) edge node {$\frac{2}{4}$} (30)
				 edge[bend left] node {$\frac{1}{4}$} (21)
		(21) edge node {$\frac{2}{4}$} (31)
				 edge node {$\frac{1}{4}$} (20)
		(30) edge node {$\frac{1}{4}$} (40)
				 edge[bend left] node {$\frac{1}{4}$} (32)
		(31) edge node {$\frac{1}{4}$} (41)
				 edge node {$\frac{1}{4}$} (30)
				 edge[bend left] node {$\frac{1}{4}$} (32)
		(32) edge node {$\frac{1}{4}$} (42)
				 edge node {$\frac{2}{4}$} (31)
		(40) edge[bend left] node {$\frac{1}{4}$} (43)
		(41) edge[bend left] node {$\frac{1}{4}$} (43)
				 edge node {$\frac{1}{4}$} (40)
		(42) edge[bend left] node {$\frac{1}{4}$} (43)
				 edge node {$\frac{2}{4}$} (41)
		(43) edge node {$\frac{3}{4}$} (42);
\end{tikzpicture}  }
\title{Coupling for features of random walks}
\author{Graham White}
\date{\today}
\begin{document}

\maketitle

\begin{abstract}
We use coupling to study the time taken until the distribution of a statistic on a Markov chain is close to its stationary distribution. Coupling is a common technique used to obtain upper bounds on mixing times of Markov chains, and we explore how this technique may be used to obtain bounds on the mixing of a statistic instead.
\end{abstract}

\section{Introduction}
\label{sec:intro}

We are interested in the following general problem.

\begin{Problem}
If $\M$ is a Markov chain, and $f$ is a function defined on the states of $\M$, how long must $\M$ be run to guarantee that the distribution of $f$ is close to what it would be on the stationary distribution of $\M$?
\end{Problem}

Much is known about various schemes for shuffling a deck of cards (\cite{dovetail}, \cite{randomtranspositions}), and how many shuffles are necessary before the deck is `random'. In some circumstances it might not be necessary that the entire deck be random, but just some part of it. For example, perhaps a certain game of poker only uses the top $17$ cards of the deck. In playing this game, only the identity and order of these top $17$ cards are important, not the order of the entire deck. It might be expected that to randomise the cards in these positions, fewer shuffles are required than are necessary to randomise the entire deck. This is one instance of the problem --- given a shuffling scheme, how many iterations are required to randomise the top $17$ cards of the deck? 

The same question can be asked for other choices of $f$ --- how long until the four bridge hands dealt in blocks from this deck are random? That is, the sets of cards in positions 1-13, 14-26, 27-39 and 40-52, but not their exact locations within these blocks. What if the dealing is done to each player in turn rather than in blocks of $13$ consecutive cards? How many shuffles are necessary to randomise the location of the ace of spades, or the identity of the card immediately following the ace of spades, or the distance between the aces of spades and hearts?

Problems of this sort have been considered previously. \cite{ADS} studies the mixing time of a deck of cards where certain sets of cards are identified, for instance where the suits of cards do not matter, or all face cards are equivalent. That paper gives an explicit formula for the separation distance after $t$ steps in such a setting. \cite{better} discusses this same problem as well as the dual problem of asking about the hands of cards dealt from a shuffled deck, ignoring the order in which those cards were dealt. This may be seen as identifying sets of positions rather than sets of cards. \cite{better} also presents intriguing computational data showing that the number of riffle shuffles required for this latter problem changes depending on the dealing method used, that is, that identifying different sets of positions produces different results. In various contexts, the values of the function $f$ might be referred to as `statistics' or `features' of the Markov chain.

Both \cite{JCthesis} and \cite{ADS} show that the mixing time for the position of a single card under riffle shuffles is $\log_2(n) + c$, \cite{JCthesis} by calculating the eigenvalues of the walk and also via a coupling argument, and \cite{ADS} by explicit calculations. \cite{repeatedcards1} simulates the required number of shuffles for the games of bridge and blackjack. The introduction of \cite{ADS} has additional references and background.

Previous work on these problems has mostly involved explicit calculations and formulas, which are then analysed with calculus. An exception is \cite{rabinovich2016function}, which develops the use of eigenfunctions, where the statistic in question is expanded in an eigenbasis of the original chain. Appendix B of \cite{ADS} also contains a computation in this style.  

The contributions of this paper and the sequel \cite{GWsstfeatures} are technical --- we describe how coupling (and in the sequel, strong stationary times) may be adapted to give upper bounds on the mixing of a function on a Markov chain. These appear to be the first general approaches which use more probabilistic methods. 

In general, the answers to these problems will depend on the function $f$. There are choices for $f$ where the distribution of $f$ is correct after one step, and there are choices where the distribution of $f$ is not correct until the whole chain is near its stationary distribution. Instances of each behaviour are shown in Section \ref{sec:examples}. We will also examine what known couplings (those already used for upper bounds on mixing times) have to say about various choices of $f$. 

In some cases the statistic $f$ may form a Markov chain in its own right, as a quotient chain of $\M$. For example, when shuffling a deck of cards, the location of the ace of spades at time $(t+1)$ depends only on its location at time $t$. In contrast, the knowledge of which cards are in the top half of the deck at time $t$ is usually not enough information to determine which cards will be in the top half of the deck at time $(t+1)$. We do not require that $f$ be a quotient chain of $\M$. When this does occur, the analysis will not use this fact, because the goal is to demonstrate techniques that are applicable in more general settings. For this reason, some of the simpler examples may work more nicely than expected.

In this paper, upper bounds on mixing times will come from coupling arguments, so mixing times will be according to total variation distance.

\begin{Definition}
Let $\M$ be an aperiodic and irreducible Markov chain and $f$ be a function on the state space of $\M$. The \defn{stationary distribution of $f$} is the distribution of $f$ on the stationary distribution of $\M$. 
\end{Definition}

\begin{Example}
\label{ex:statlist}
Our first examples are shuffling schemes on a deck of $n$ cards. That is, they will be random walks on the group $S_n$.

Here are some statistics of interest. For riffle shuffles, statistics involving the locations of certain cards or cards in particular locations have been analysed in \cite{ADS} and \cite{repeatedcards1} in more detail than in this paper. The focus here is on developing probabilistic techniques for these problems. 
\begin{itemize}
\item The value of the top, second-to-top, bottom, or $k$th card.
\item The values and order of the top $k$ cards, or of the cards in a particular set of positions.
\item The set of cards in a particular set of positions, ignoring their relative order. For example, the sets of cards in the top quarter of the deck, the next quarter, the next quarter, and the bottom quarter, as might be relevant if one were to deal cards in blocks. Alternatively, the sets of cards in positions congruent to $i$ modulo four, for each $i$, as if cards were to be dealt one at a time.
\item The location of a particular card or set of cards.
\item The parity of the permutation.  
\end{itemize}
\end{Example}

Some answers to questions like these are the following, which are proven in Section \ref{sec:examples}. 

\begin{Proposition}
Using the random-to-top shuffle on a deck of $n$ cards, it takes $n\log(n)$ steps to get the entire deck close to random (via a standard coupon-collector argument), but only $n\log(\frac{n}{n-16})$ steps to get the top 17 cards close to random, or $n\log(\frac{3}{2})$ steps to get the top third of the deck close to random.  
\end{Proposition}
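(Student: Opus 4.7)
The plan is to prove all three bounds via a single coupling and then specialize $k$. Let $(M_t)$ and $(M'_t)$ be two copies of the random-to-top chain, with $M_0$ in an arbitrary worst-case deterministic state and $M'_0$ drawn from the uniform distribution on $S_n$. Couple them by drawing at each step a uniformly random card name $c\in\{1,\ldots,n\}$ and moving the card labelled $c$ to the top in both decks; this is a valid coupling because choosing a card by name is distributionally the same as choosing one by position.

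The key observation is that once a card $c$ has first been selected at time $s\leq t$, its position at all later times is a deterministic function of the shared choices made after time $s$ and therefore agrees across the two decks. Hence, after $t$ steps, the top $k$ positions of $M_t$ and $M'_t$ contain the same cards in the same order precisely when at least $k$ distinct names have appeared among the first $t$ draws; call this waiting time $T_k$. For any statistic $f$ depending only on the top $k$ cards --- in particular the ordered top $k$ or the unordered set of cards in those positions --- the coupling inequality gives
\[
\|f(M_t) - \pi^f\|_{TV} \;\leq\; \pr{T_k > t}.
\]
Now $T_k$ is a sum of independent geometric random variables with success probabilities $n/n,\,(n-1)/n,\,\ldots,\,(n-k+1)/n$, so
\[
\E{T_k} \;=\; n\sum_{i=0}^{k-1}\frac{1}{n-i} \;=\; n\bigl(H_n - H_{n-k}\bigr) \;\sim\; n\log\!\left(\frac{n}{n-k}\right).
\]
Specialising $k=n$, $k=17$, $k=n/3$ yields the three target expectations $nH_n \sim n\log n$, $n\log(n/(n-16))$, and $n\log(3/2)$.

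To convert expected coupling times into total-variation mixing bounds I would combine the inequality above with concentration of $T_k$. For $k=n$ the standard union bound $\pr{T_n > t}\leq n(1-1/n)^t$ yields the coupon-collector tail $\pr{T_n > n\log n + cn}\leq e^{-c}$. For $k=n/3$ one has $\Var(T_{n/3}) = O(n)$, so Chebyshev gives deviations of order $\sqrt{n}$ around the mean, which is of lower order than $n\log(3/2)$. For $k=17$ both $\E{T_{17}}$ and $\Var(T_{17})$ are $O(1)$ as $n\to\infty$, and Chebyshev again delivers the bound. No step is genuinely hard --- the main points of care are verifying that the simple "same name in both decks" coupling really produces agreement on the top $k$ positions the moment $k$ distinct cards have been drawn, and matching the partial coupon-collector sum to its logarithmic approximation with the correct off-by-one.
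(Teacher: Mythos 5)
Your proof is correct and essentially the same as the paper's: the paper couples by choosing matching card names, observes that the top $k$ positions agree once $k$ distinct labels have been drawn, identifies the coupling time as $\G(1)+\G(\frac{n-1}{n})+\cdots+\G(\frac{n-k+1}{n})$ with expectation bounded by $n\log\frac{n}{n-k}$, and invokes Chebyshev for concentration. (Minor note: your general formula gives $n\log\frac{n}{n-17}$ for $k=17$, not $n\log\frac{n}{n-16}$; the discrepancy comes from the Proposition's informal phrasing, not from your argument.)
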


\begin{Proposition}
Using inverse GSR riffle shuffles on a deck of $n$ cards, it takes $\frac{3}{2}\log_2(n)$ steps to get the entire deck close to random, but only $\log_2(n)$ steps to get any of the following statistics close to random: The identity of the top card, the location of the ace of spades, the set of cards in the top quarter of the deck, or the sets of cards in each quarter of the deck.
\end{Proposition}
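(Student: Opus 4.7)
The full-deck bound $\frac{3}{2}\log_2 n$ is the classical Bayer--Diaconis cutoff, which I would simply cite. The four statistic-level bounds are all handled by a single shared-label coupling.

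\textbf{Coupling.} Describe each inverse GSR shuffle by assigning to every card $c$ an independent uniform bit $B_c$ and then sorting so that $0$-cards precede $1$-cards with previous relative order preserved. After $k$ shuffles every card carries a uniform label $L_c \in \{0,\dots,2^k-1\}$, and the final permutation is the one sorting the cards by $L_c$ with ties broken by the starting order. Couple two chains $\sigma_k$ (starting from the identity) and $\tau_k$ (starting from the uniform distribution on $S_n$) by using the same bit $B_c$, hence the same label $L_c$, for each card $c$ in both chains. This is a valid coupling; $\tau_k$ is stationary for every $k$, so for every statistic $f$,
\[
\|\mathcal{L}(f(\sigma_k))-\pi_f\|_{TV}\le\pr{f(\sigma_k)\neq f(\tau_k)}.
\]
Because both chains share all labels, any disagreement comes from differing tie-breaking among cards with a common label.

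\textbf{Four bounds.} Set $k=\log_2 n+c$ and $N=2^k$. \emph{Top card:} both chains produce the same top whenever the minimum label is uniquely attained; summing $\pr{\min=m,\;\text{unique}}=np(1-(m+1)p)^{n-1}$ with $p=1/N$ over $m$ (Euler--Maclaurin) gives $\pr{\text{unique min}} = 1 - n/(2N) + O(n/N^2) = 1 - O(2^{-c})$. \emph{Ace of spades:} the ace has the same rank in both chains whenever no other card shares its label, with probability $(1-1/N)^{n-1}\ge 1 - O(2^{-c})$. \emph{Top quarter:} the two sets agree iff the $(n/4)$-th and $(n/4+1)$-st smallest labels are distinct (otherwise the boundary label is split differently by the two starting orders). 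Writing $L_c=\lfloor NU_c\rfloor$ with $U_c$ i.i.d.\ uniform on $[0,1]$ and using that the adjacent spacing has density $\approx n(1-g)^{n-1}$, the straddle probability is
\[
\int_0^{1/N}(1-gN)\,n(1-g)^{n-1}\,dg \;=\; O(n/N) \;=\; O(2^{-c}).
\]
\emph{All four quarters:} union-bound the same straddle estimate over the three cut-points $n/4,\,n/2,\,3n/4$.

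\textbf{Main obstacle.} The one non-routine ingredient is the straddle probability at a fixed quantile in the regime $N = \Theta(n)$, where global collisions are abundant but a collision at a prescribed order statistic still has probability only $O(n/N)$; care is needed because the relevant spacing $\sim 1/n$ is of the same order as the discretization scale $1/N$. Once that estimate is in hand, the four $\log_2 n + O(1)$ bounds follow cleanly from the shared-label coupling, and the full-deck claim is immediate from Bayer--Diaconis.
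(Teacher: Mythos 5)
Your proposal is correct and takes essentially the same approach as the paper: you use the identical shared-label coupling and the identical characterization of the coupling events (unique minimum label for the top card, no other card sharing the ace's label, distinct labels at the $n/4$ boundary for the top quarter, and a union bound over the three cut-points for all four quarters). The only difference is computational bookkeeping --- where you estimate the event probabilities directly by summing over label values and by order-statistic spacings, the paper instead routes these estimates through first-moment lemmas (Propositions \ref{prop:matchcard}, \ref{prop:matchcard2} and \ref{prop:matchposition}) plus Markov's inequality, arriving at the same $\log_2(n)+O(1)$ bounds.
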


\begin{Example}
\label{ex:hypercube}
Consider the random walk on the $n$--dimensional hypercube where at each step there is a $\frac{1}{2}$ chance to move to a random neighbour and a $\frac{1}{2}$ chance to remain still. This can also be considered as a random walk on $n$--bit binary strings, where at each step a random bit is chosen and replaced with $0$ or $1$ with equal probability.

How long does it take until statistics such as the following are close to their stationary distributions?
\begin{itemize}
\item The value of the first bit
\item The number of `1' bits
\item The location of the first `1'.
\end{itemize}
\end{Example}

Section \ref{sec:hypercube} shows that the value of the first bit mixes after $n$ steps, and that the location of the first 1 mixes after $O(n)$ steps. This should be compared to the mixing time of the walk, which is $\frac{1}{2}n\log(n)$.


\subsection*{Acknowledgements}

I am grateful to my advisor, Persi Diaconis, for introducing me to this field and for many helpful discussions.

\section{Coupling for features of random walks}
\label{sec:coupling}

This section gives results relating coupling to the convergence of a statistic on a Markov chain. These will be used in Section \ref{sec:examples} to give examples of bounds on the convergence of the statistics mentioned in Example \ref{ex:statlist} for some simple shuffling techniques --- random-to-top shuffles, inverse riffle shuffles, and random transpositions.

For the sake of comparison, we will first give a proof that couplings give bounds on the mixing time of the whole Markov chain. This is a classical result (Theorem 5.2 of \cite{LPW}), but the perspective on the problem will be useful for the material which follows. 

\begin{Proposition}
\label{prop:coupling1}
Let $C$ be a coupling on two instances of a Markov chain $\M$, $p$ be between 0 and 1, and $t$ be a positive integer. Let $X_t$ and $Y_t$ be the states of the two instances of $\M$ after taking $t$ steps according to $C$. If for any initial states $x_0$ and $y_0$, there is at least a probability $p$ that $X_t = Y_t$, then for any initial state $x_0$ the distribution of $X_t$ is within $(1-p)$ of the stationary distribution of $\M$ in total variation distance.
\end{Proposition}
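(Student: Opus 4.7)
The plan is to use the classical coupling inequality together with the fact that if one of the two coupled chains is started from the stationary distribution, then it remains stationary for all time. Let $\pi$ denote the stationary distribution of $\mathcal{M}$. I will fix the starting state $X_0 = x_0$ and let $Y_0$ be drawn from $\pi$, then run the pair $(X_t, Y_t)$ according to the coupling $C$. Since $Y_0 \sim \pi$ and $\mathcal{M}$ is Markov with stationary distribution $\pi$, the marginal law of $Y_t$ is $\pi$ for every $t$.

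Next I would invoke the standard coupling inequality: for any two random variables $X_t$ and $Y_t$ on the state space of $\mathcal{M}$,
\[
d_{TV}(\mathcal{L}(X_t), \mathcal{L}(Y_t)) \le \pr{X_t \neq Y_t}.
\]
This follows from the observation that for any event $A$, $\pr{X_t \in A} - \pr{Y_t \in A} \le \pr{X_t \neq Y_t}$, and then taking the supremum over $A$. Since $\mathcal{L}(Y_t) = \pi$, the left-hand side is precisely the total variation distance between the law of $X_t$ (started at $x_0$) and the stationary distribution.

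It remains to bound $\pr{X_t \neq Y_t}$ using the hypothesis. The assumption gives, for every deterministic pair of initial states $(x_0, y_0)$, that $\prcond{X_t = Y_t}{X_0 = x_0, Y_0 = y_0} \ge p$. Averaging over $Y_0 \sim \pi$ preserves this bound, so $\pr{X_t = Y_t} \ge p$ and hence $\pr{X_t \neq Y_t} \le 1-p$. Combining with the previous step yields $d_{TV}(\mathcal{L}(X_t), \pi) \le 1-p$, as required.

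There is no real obstacle here; the only subtlety is being careful that the hypothesis on the coupling is uniform over initial states, which is exactly what allows the averaging over $Y_0 \sim \pi$ in the last step. This uniformity is also what will make the argument generalise cleanly when, in the next results, we replace the event $\{X_t = Y_t\}$ with the weaker event $\{f(X_t) = f(Y_t)\}$ appropriate for statistics.
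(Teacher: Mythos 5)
Your proposal is correct and is essentially the same argument the paper gives, just phrased in the cleaner textbook language: you invoke the coupling inequality $d_{TV}(\mathcal{L}(X_t),\mathcal{L}(Y_t)) \le \pr{X_t \neq Y_t}$ directly, whereas the paper proves the same fact in-line via its "pairing of weighted paths" description of overlapping probability mass, and your averaging step over $Y_0 \sim \pi$ is exactly the content of the paper's Lemma \ref{lem:linearcombination}. Same decomposition (start one chain at $\pi$, note $Y_t \sim \pi$, bound the miscoupling probability), so no substantive difference.
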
 
\begin{proof}
A condition of this result is that for any initial states $x_0$ and $y_0$, there is at least a probability $p$ that $X_t = Y_t$. From Lemma \ref{lem:linearcombination}, this is also true if $X_0$ and $Y_0$ are allowed to be distributions. This result is used with $X_0$ being an arbitrary fixed state and $Y_0$ being the stationary distribution $\un $. 

Consider $X_t$ and $Y_t = \un M^t = \un$. To show that these distributions overlap in most of their area, let $\P_1$ be the set of paths of length $t$ starting at $X_0$, and let $\P_2$ be the set of paths of length $t$ starting at $\un $, all paths being weighted by their probabilities. The goal is to pair proportion $p$ of the paths from $\P_1$ with paths from $\P_2$ which end at the same place. Because $X_t$ is the distribution of endpoints of paths in $\P_1$ and $Y_t$ is the distribution of endpoints of paths in $\P_2$, this will guarantee that these two distributions overlap in $p$ of their area. 

This pairing is given by the coupling $C$. Start with two copies of $\M$, one in the state $X_0$ and the other in the stationary distribution $\un $, and evolve them according to the coupling $C$. Pair the paths taken by the two chains. This is a pairing between paths from $\P_1$ and paths from $\P_2$ because $C$ is a coupling, so the behaviour in either chain is what it would be in isolation. There is at least probability $p$ that the two chains end in the same state, so at least $p$ of the paths in $\P_1$ are paired with a path from $\P_2$ which ends at the same state. Therefore $X_t$ and $Y_t = \un$ overlap in at least $p$ of their area, as required.
\end{proof}

\begin{Lemma}
\label{lem:linearcombination}
Using the notation of Proposition \ref{prop:coupling1}, assume that for any two initial states $x_0$ and $y_0$, there is at least a probability $p$ that $X_t = Y_t$ when the two chains evolve according to the coupling $C$. Then if the two chains are started in arbitrary distributions $X_0$ and $Y_0$ rather than fixed states, there is still at least probability $p$ that $X_t = Y_t$.
\end{Lemma}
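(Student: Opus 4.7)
The plan is to reduce the distributional case to the fixed-state case by conditioning on the initial pair $(X_0, Y_0)$. I interpret the coupling $C$ as a rule that, given any ordered pair of starting states, produces a joint law on pairs of trajectories; when the chains are started from distributions $\mu$ and $\nu$, we first sample $(X_0, Y_0)$ according to some joint law with marginals $\mu$ and $\nu$ (for instance, independently), and then evolve the pair for $t$ steps according to $C$.

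First I would write, by the law of total probability,
\[
\pr{X_t = Y_t} = \sum_{x_0, y_0} \pr{X_0 = x_0, \, Y_0 = y_0} \, \prcond{X_t = Y_t}{X_0 = x_0, \, Y_0 = y_0}.
\]
Next I would apply the hypothesis of the lemma to each summand: for every choice of fixed starting states $x_0$ and $y_0$, the conditional probability $\prcond{X_t = Y_t}{X_0 = x_0, \, Y_0 = y_0}$ is at least $p$. Substituting this lower bound into the sum gives
\[
\pr{X_t = Y_t} \geq p \sum_{x_0, y_0} \pr{X_0 = x_0, \, Y_0 = y_0} = p,
\]
since the joint distribution of $(X_0, Y_0)$ sums to $1$. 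This is exactly the desired conclusion.

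There is no real obstacle here; the only subtlety worth pointing out is the interpretation of "the coupling $C$" when the starting states are themselves random, which is handled by the preceding sentence about first sampling the pair $(X_0, Y_0)$ and then running $C$. The argument does not depend on whether $X_0$ and $Y_0$ are independent or coupled in some other way, which is convenient for the application to Proposition \ref{prop:coupling1}, where $Y_0$ is taken to be distributed as $\un$.
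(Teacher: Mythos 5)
Your proof is correct and follows the same route as the paper: condition on the pair of initial states, invoke the fixed-state hypothesis, and average over the joint distribution of $(X_0, Y_0)$. The paper states this in two sentences; your writeup spells out the law of total probability explicitly and adds a helpful clarification about what it means to run the coupling from random initial states, but the underlying argument is identical.
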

\begin{proof}
Conditioned on any pair of initial states $x_0$ and $y_0$, the probability that $X_t = Y_t$ is at least $p$. Averaging these probabilities over the distributions $X_0$ and $Y_0$ gives the required result.
\end{proof}

We now adapt Proposition \ref{prop:coupling1} to apply to features of a Markov chain.

\begin{Proposition}
\label{prop:coupling2}
As in Proposition \ref{prop:coupling1}, let $C$ be a coupling on two copies of a Markov chain $\M$, $p$ be between $0$ and $1$, $t$ a positive integer, and $f$ a function on the state space $\Omega$. If for any initial states $x_0$ and $y_0$ there is at least a probability $p$ that $f(X_t) = f(Y_t)$ when the chains $X$ and $Y$ evolve according to the coupling $C$, then for any initial state $x_0$ the distribution $f(X^t)$ is within $(1-p)$ of $f(\un )$, the stationary distribution of $f$, in total variation distance.
\end{Proposition}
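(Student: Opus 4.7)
The plan is to mirror the proof of Proposition \ref{prop:coupling1} almost verbatim, replacing ``equal states'' with ``$f$-equal states'' throughout. The key conceptual observation is that the coupling $C$ on $\M$ automatically induces a coupling on the pushforward random variables $f(X_t)$ and $f(Y_t)$, and the hypothesis asserts exactly that this induced coupling has coincidence probability at least $p$. The classical coupling inequality then gives a total variation bound between $f(X_t)$ and $f(Y_t)$.

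First I would note that Lemma \ref{lem:linearcombination} extends without change to this setting: conditioning on pairs of initial states $(x_0,y_0)$ and averaging over arbitrary initial distributions $X_0,Y_0$ preserves the bound $\pr{f(X_t)=f(Y_t)} \ge p$, since the event $\{f(X_t)=f(Y_t)\}$ is still an event in the joint law produced by $C$. I would apply this with $X_0$ equal to the point mass at $x_0$ and $Y_0=\un$, so that $Y_t$ has distribution $\un$ and $f(Y_t)$ has the stationary distribution of $f$.

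Next I would repeat the path-pairing argument of Proposition \ref{prop:coupling1}. Let $\P_1$ be the set of length-$t$ paths starting at $x_0$, weighted by their probabilities, and $\P_2$ the analogous set of paths starting from $\un$. Running the two chains jointly under $C$ pairs paths in $\P_1$ with paths in $\P_2$; now, however, I only ask that the endpoints agree under $f$, not exactly. By hypothesis this occurs with probability at least $p$, so the distributions of $f(X_t)$ and $f(Y_t)=f(\un)$ overlap in at least $p$ of their mass, i.e.\ their total variation distance is at most $1-p$.

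There is essentially no main obstacle: the only subtlety is being clear that passing through $f$ really does turn the joint law of $(X_t,Y_t)$ into a valid coupling of the marginals of $f(X_t)$ and $f(Y_t)$, so that the coupling inequality applies to the pushforward distributions. This is immediate because the image of a coupling under a deterministic map applied coordinatewise is a coupling of the image distributions, and the event $\{f(X_t)=f(Y_t)\}$ lifts pointwise to coincidence in the pushforward. Everything else is bookkeeping identical to Proposition \ref{prop:coupling1}.
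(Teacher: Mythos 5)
Your proposal follows the paper's proof essentially verbatim: it invokes the averaging lemma (the paper labels it Lemma \ref{lem:linearcombinationf}, the $f$-version of Lemma \ref{lem:linearcombination}) to pass to $Y_0 = \un$, then reuses the path-pairing argument from Proposition \ref{prop:coupling1} with equality of endpoints relaxed to equality of $f$-values, yielding overlap at least $p$ and hence total variation distance at most $1-p$. The added observation that $C$ induces a coupling of the pushforward laws $f(X_t)$ and $f(Y_t)$ is a clean way to phrase the same step and does not change the argument.
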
 
\begin{proof}
This proof is very similar to the proof of Proposition \ref{prop:coupling1}. As with that proof, start by using Lemma \ref{lem:linearcombinationf} to show that for any initial distributions $X_0$ and $Y_0$, there is at least a probability $p$ that $f(X_t) = f(Y_t)$.

Let $X_0$ be any initial distribution, and $Y_0$ be the stationary distribution $\un$. Consider $f(X_t)$ and $f(Y_t) = f(\un M^t) = f(\un)$. As in the proof of Proposition \ref{prop:coupling1}, let $\P_1$ be the set of paths of length $t$ starting at $X_0$, and let $\P_2$ be the set of paths of length $t$ starting at $\un $, weighted by their probabilities. The goal is to pair proportion $p$ of the paths from $\P_1$ with paths from $\P_2$ which end, not necessarily at the same state, but at a state with the same value of $f$. This guarantees that the two distributions $f(X_t)$ and $f(Y_t) = f(\un)$ overlap in $p$ of their area. 

As in the proof of Proposition \ref{prop:coupling1}, the pairing is given by the coupling $C$. Start with two copies of $\M$, one in the distribution $X_0$ and the other in the stationary distribution $\un $, and evolve them according to $C$. Pair the paths taken by the two chains. This is a pairing between paths in $\P_1$ and $\P_2$, because $C$ is a coupling. There is at least probability $p$ that the two chains end in states with matching values of $f$, so at least $p$ of the paths in $\P_1$ are paired with a path from $\P_2$ which ends at a state where $f$ takes the same value. Therefore $f(X_t)$ and $f(\un)$ overlap in at least $p$ of their area, as required.
\end{proof}

\begin{Lemma}
\label{lem:linearcombinationf}
Using the notation of Proposition \ref{prop:coupling2}, assume that for any two initial states $x_0$ and $y_0$, there is at least a probability $p$ that $f(X_t) = f(Y_t)$ when the two chains evolve according to the coupling $C$. Then if the two chains are started in arbitrary distributions $X_0$ and $Y_0$ rather than fixed states, there is still at least probability $p$ that $f(X_t) = f(Y_t)$.
\end{Lemma}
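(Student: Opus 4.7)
The plan is to mimic the proof of Lemma \ref{lem:linearcombination} essentially verbatim, since the only change is that the event $\{X_t=Y_t\}$ has been replaced by the weaker event $\{f(X_t)=f(Y_t)\}$, and the averaging argument is insensitive to which event we are tracking.

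First I would fix the coupling $C$ and note that the hypothesis gives, for every pair of initial states $(x_0,y_0)$, the bound
\[
\prcond{f(X_t)=f(Y_t)}{X_0=x_0,\,Y_0=y_0} \;\geq\; p.
\]
Then I would take arbitrary initial distributions $X_0$ and $Y_0$ and apply the law of total probability, conditioning on the initial pair of states. Writing $\mu$ for the joint distribution of $(X_0,Y_0)$ induced by whatever joint law we put on the two starting distributions (any joint law works, since we only need the marginals to match $X_0$ and $Y_0$), we get
\[
\pr{f(X_t)=f(Y_t)} \;=\; \sum_{x_0,y_0}\mu(x_0,y_0)\,\prcond{f(X_t)=f(Y_t)}{X_0=x_0,\,Y_0=y_0} \;\geq\; p\sum_{x_0,y_0}\mu(x_0,y_0) \;=\; p.
\]

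There is essentially no obstacle here; the only thing to be slightly careful about is that the conditional probability statement in the hypothesis refers to the coupled evolution starting from deterministic initial states, and this extends to random initial states simply because the coupling rule $C$ is applied step-by-step and is independent of the starting mechanism. So the averaging interpretation is valid. I would conclude by noting that the argument makes no use of the structure of $f$ and would go through identically for any event depending on $(X_t,Y_t)$ whose conditional probability given the initial pair is at least $p$.
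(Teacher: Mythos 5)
Your argument is correct and is exactly the paper's approach: the paper proves Lemma \ref{lem:linearcombinationf} by invoking the averaging argument of Lemma \ref{lem:linearcombination} verbatim, which is precisely the conditioning-and-total-probability step you carry out. Nothing further is needed.
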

\begin{proof}
The proof is the same as that of Lemma \ref{lem:linearcombination}.
\end{proof}

It is not obvious that the bound on the mixing time given by Proposition \ref{prop:coupling2} decreases with $t$. Lemma \ref{lem:decreasing} will show that this is the case.

\begin{Remark}
\label{rem:stationarycontinue}
When constructing a coupling to be used with Proposition \ref{prop:coupling1}, it is possible to decree that if the two chains are in the same state, then they will move in the same way, guaranteeing that they will continue to agree with one another. When constructing a coupling for Proposition \ref{prop:coupling2}, this is still possible --- that is, two chains in the same state will continue to agree --- but the same cannot be done for values of $f$. It may be that two chains presently have the same value of $f$, but cannot be coupled so that after one step they have the same value of $f$. Example \ref{ex:cantcouplef} gives an example of this behaviour. This observation is the same as noticing that the values of $f$ may not form a quotient Markov chain of $\M$.
\end{Remark}

\begin{Example}
\label{ex:cantcouplef}

Consider the random-to-top walk, and let $f$ be the label of the second-to-top card of the deck. If two decks are currently in the states $x_0 = (1,2,3,4,5,\dots,n)$ and $y_0 = (3,2,1,4,5,\dots,n)$, then $f(x_0) = f(y_0) = 2$. However, the distributions of $f$ after one step, $f(x_1)$ and $f(y_1)$ are wildly different for the two chains, as shown in the following table.

\begin{center}
\begin{tabular}{ccc}
$a$ & $P(f(x_1) = a)$ & $P(f(y_1) = a)$ \\
\rule{0pt}{3ex}1 & $\frac{n-1}{n}$ & 0 \\
\rule{0pt}{3ex}2 & $\frac{1}{n}$ & $\frac{1}{n}$ \\
\rule{0pt}{3ex}3 & 0 & $\frac{n-1}{n}$ \\
\end{tabular}
\end{center}
\end{Example}

\begin{Remark}
\label{rem:exp}
When analysing the convergence of a Markov chain, it suffices to find the time at which the total variation distance from stationarity falls below, say, a quarter, because it then decays exponentially (for instance, see Section 4.5 of \cite{LPW}). The analogous result for the convergence of a statistic is false, as shown in Example \ref{ex:convergencetable}. It is still the case that the total variation distance of the distribution of the statistic from stationarity eventually falls off exponentially, just no longer that the speed of this decay is controlled by the time taken for the distance to fall below a quarter.
\end{Remark}

\begin{Example}
\label{ex:convergencetable}
Consider the following variation on the random-to-top walk on the permutations of a deck of $n$ cards. At each step choose a card uniformly at random, and move it to the top of the deck. The bottom card of the deck is stuck to the table, and attempts to move it only succeed with probability $\frac{1}{100}$, otherwise the order of the deck remains unchanged. Let $f$ be the label of the top card, and the original top and bottom cards be 1 and $n$.

After one step of the chain, the distribution of $f$ is

\begin{center}
\begin{tabular}{cc}
$a$ & $P(f(x) = a)$\\
\rule{0pt}{3ex}1 & $\frac{1.99}{n}$\\
\rule{0pt}{3ex}$2 \leq a \leq n-1$ & $\frac{1}{n}$\\
\rule{0pt}{3ex}$n$ & $\frac{0.01}{n}$\\
\end{tabular}
\end{center}

The total variation distance between the distribution of $f$ after just one step and the uniform distribution is less than $\frac{1}{n}$. After 50 steps, it is more likely than not that the bottom card has not moved, so the total variation distance between the distribution of $f(X_{50})$ and the uniform distribution is at least $\frac{1}{2n}$. 
\end{Example}

\begin{Remark}
\label{rem:ptpairs}
As a consequence of Remark \ref{rem:exp}, when using a coupling to examine convergence of a statistic, the relevant information is not just how long it takes for the total variation distance to drop below one quarter (or any other fixed small number), as it might be while using a coupling for the mixing time, but rather is a sequence of data points of the form ``for any starting point, after time $t_i$, the total variation distance of the statistic is less than $p_i$''.
\end{Remark}

\begin{Lemma}
\label{lem:decreasing}
Let $\M$ be a Markov chain, $f$ a function on the state space of $\M$, and $d(t)$ be the maximum distance (either total variation or separation) of $f$ from uniform after $t$ steps of $\M$, over all possible starting configurations. Then $d$ is a nonincreasing function of $t$.
\end{Lemma}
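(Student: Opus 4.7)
The plan is to prove this by showing that the map taking a distribution $\mu$ to its distance $d(\mu, f(\pi))$ from the stationary distribution of $f$ is convex in $\mu$, and then expressing the law of $f(X_{t+1})$ starting from any fixed state as a mixture of laws of $f(X_t)$ starting from various states.

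First, I would establish convexity of both distance notions in the first argument. For total variation, $d_{TV}(\mu, \nu) = \frac{1}{2}\sum_x |\mu(x) - \nu(x)|$ is convex in $\mu$ by the triangle inequality for $|\cdot|$. For separation, $\mathrm{sep}(\mu, \nu) = \max_x \left[1 - \mu(x)/\nu(x)\right]$ is a pointwise maximum of functions that are affine in $\mu$, hence convex.

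Second, I would use the Markov property to decompose the law of $f(X_{t+1})$ as a mixture. Writing $\mu_y$ for the distribution of $f(X_t)$ started from $y$, and $P$ for the transition kernel of $\M$, one step followed by $t$ steps gives
\[
    \mathrm{Law}\bigl(f(X_{t+1}) \mid X_0 = x_0\bigr) \;=\; \sum_y P(x_0, y)\, \mu_y.
\]
Applying convexity of $d(\cdot, f(\pi))$ and then the trivial bound $d(\mu_y, f(\pi)) \le d(t)$ for each $y$,
\[
    d\bigl(\mathrm{Law}(f(X_{t+1})\mid x_0),\, f(\pi)\bigr) \;\le\; \sum_y P(x_0, y)\, d(\mu_y, f(\pi)) \;\le\; d(t).
\]
Taking the supremum over $x_0$ yields $d(t+1) \le d(t)$, as desired.

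The argument is essentially the classical contraction-under-the-kernel proof, but applied at the level of $f$--pushforwards rather than to the full chain; the only potential subtlety is checking that separation distance enjoys the same convexity as total variation, and that is immediate from its definition as a pointwise maximum of affine functionals. No obstacle beyond that is expected.
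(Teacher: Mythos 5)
Your proof is correct and takes essentially the same approach as the paper's: both decompose the $(t+1)$-step distribution of $f$ as ``one step, then $t$ steps'' and bound the resulting distance by $d(t)$. The paper's proof is terser and glosses over the fact that after one step the chain is in a \emph{distribution} over states rather than a single state; your explicit appeal to convexity of total variation and separation distance (and the decomposition $\mathrm{Law}(f(X_{t+1})\mid x_0) = \sum_y P(x_0,y)\,\mu_y$) cleanly fills in that small gap.
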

\begin{proof}
For any starting configuration, the distribution of $f$ after $t+1$ steps of $\M$ is a distribution of $f$ after $t$ steps of $\M$, starting from $t=1$. The definition of $d(t)$ is the maximum distance over all initial states, including this one, so the distance after $t+1$ steps is at most $d(t)$. Therefore $d(t+1) \leq d(t)$. 
\end{proof}

Unlike the convergence of actual Markov chains (Lemmas 3.7 and 4.5 of \cite{ADstrong}), in this setting the total variation distance from stationarity is not submultiplicative. The distance will be submultiplicative eventually, but not at all times. This should be understood as the distance sometimes being small earlier than expected due to factors which do not control the long-term rate of convergence. Example \ref{ex:convergencetable} illustrates this behaviour. 

To this end, it will be convenient to work with the coupling time. The definition of a coupling time is modified in the natural way to allow for couplings of statistics on a Markov chain.  

\begin{Definition}
\label{def:couplingtime2}
Let $\M$ be a Markov chain and $C$ be a coupling on $\M$. The \emph{coupling time} is the (random) time until the two copies of $\M$ are in the same state, or the time until they have matching values of $f$, depending on the aim of the coupling in question.
\end{Definition}

\section{Examples}
\label{sec:examples}

One way to apply Proposition \ref{prop:coupling2} is to consider a coupling that has been successful in obtaining a bound for the mixing time via Proposition \ref{prop:coupling1}, and check what it says about a function $f$ of interest. This section details what some well-known couplings say about various statistics on the respective state spaces.

Keep in mind that these are only upper bounds on the time taken for a statistic to mix --- some statistics may well mix faster than shown by this particular coupling. These examples are intended to show how Proposition \ref{prop:coupling2} may be applied to reduce a mixing time problem to a question regarding a coupling time. The goal is not to analyse these coupling times in detail, so most examples will not have detailed bounds on $\tme$ for each $\eps$, but rather will describe the process in question and give some idea of how long it takes. In many examples, Chebyshev's inequality will give good upper bounds on the time taken.

\subsection{Random-to-top shuffles}
\label{sec:rtt}

The random-to-top shuffle consists of choosing a random card at each step, and moving it to the top of the deck. A coupling for two copies of this process is to choose cards with the same label in each deck --- for instance, moving both copies of the ace of spades to the top of their respective decks, regardless of their prior positions. The time taken for two copies of this process to couple is the coupon collector time $n\log(n)$.

Consider what this coupling has to say about each of the following statistics on $S_n$. 

\begin{enumerate}
\item The top card of each chain is the same after a single step, and this continues to be true after any number of steps. Therefore this statistic is exactly uniformly distributed after one step.
\item The second-to-top cards match as soon as two different labels have been chosen, and this continues to be true after this point. With $\G(p)$ denoting a geometric distribution, the coupling time is $$T \stackrel{d}{=} \G(1) + \G(\frac{n-1}{n}).$$ The expected time is $1 + \frac{n}{n-1}$. The probability that $T > 2$ is $\frac{1}{n}$, so the second-to-top card is within $\frac{1}{n}$ of uniform after two steps, by Proposition \ref{prop:coupling2}. This corresponds to the probability that the same card is chosen twice, so the original top card is more likely to be in the second position than other cards. 

Likewise, the probability that $T > 3$ is $\frac{1}{n^2}$, so the second-to-top card is within $\frac{1}{n^2}$ of uniform after three steps.
\item The location of the card labelled $1$ is the same in each deck as long as that label has been chosen. This coupling time is $$T \stackrel{d}{=} \G(\frac{1}{n}).$$ The expected time until this happens is $n$ steps, and the variance is $n-1$.
\item The locations of the cards labelled $1$ and $2$ match in the two decks as long as both of those labels have been chosen. This coupling time is $$T \stackrel{d}{=} \G(\frac{1}{n}) + \G(\frac{2}{n}).$$The expected time until this event is $\frac{n}{2} + n$, and the variance is $\frac{n}{(n-1)^2}$. Note that this time is the sum of the two worst terms of the coupon collector problem, in contrast to matching the top two cards of the deck, which was the sum of the two best terms. This generalises to attempting to match the locations of $k$ fixed cards.
\end{enumerate}

These examples appear to behave quite differently. When the value of the top card is coupled, it may be that the two chains both have the $1$ at the top, or the $2$, or the $k$, for any $k$. In contrast, when the location of the $1$ is coupled, it is always at the top of the deck. This latter might seem disconcerting --- a result about the location of the $1$ mixing is proven by coupling two instances of the chain, but the coupling always happens in a certain position.

This issue is reconciled by recalling the definition of total variation distance. Upper bounds on the total variation distance between two distributions do not guarantee that they have some chance of agreeing in any possible value, just that there is a certain chance that they agree at some value(s). After a single random-to-top step, there is a $\frac{1}{n}$ chance that the location of the $1$ matches, so the distribution of this statistic overlaps with the uniform distribution in at least $\frac{1}{n}$ of their area. This is true --- both distributions have at least a $\frac{1}{n}$ chance that the $1$ is in the top position.

Likewise, after two random-to-top steps, there is a $\frac{2n-1}{n^2}$ chance that the location of the $1$ matches, so as previously, the distribution of the statistic after two steps overlaps with the uniform distribution in at least $\frac{2n-1}{n^2}$ of their area. Again this is true, because both distributions have at least a $\frac{n}{n^2}$ chance that the $1$ is in the top position and at least a $\frac{n-1}{n^2}$ chance that the $1$ is in the second position.

Perhaps, then, it should be surprising that the identity of the top card is equally likely to take any value when it is matched. Indeed, that this happens means that the coupling time is also a strong stationary time, and so can be used to obtain bounds in separation distance rather than total variation distance. This will be discussed further in \cite{GWsstfeatures}.

Continuing with a more detailed example,

\begin{enumerate}[resume]
\item The top $k$ cards match as long as $k$ different labels have been chosen. The coupling time is $$T \stackrel{d}{=} \G(1) + \G(\frac{n-1}{n}) + \cdots + \G(\frac{n-k+1}{n}).$$Let $T$ be the time taken for this to occur. The expected value is 
\begin{align}
\label{eq:couponfirstkex}
E(T) &= \frac{n}{n} + \frac{n}{n-1} + \dots + \frac{n}{n-k+1}\\
& \leq \int_{0}^k \frac{n}{n-x} dx \nonumber \\
& = n \int_{0}^k \frac{1}{n-x} dx \nonumber \\
& = n\left[-\log(n-x)\right]_0^k \nonumber \\
& = n\log \frac{n}{n-k} \nonumber \\
\end{align}
The variance is 
\begin{equation}
\label{eq:couponfirstkvar}
\Var(T) = 0 + \frac{n}{(n-1)^2} + \frac{2n}{(n-2)^2} + \dots + \frac{(k-1)n}{(n-k+1)^2}
\end{equation}
For instance, for a game of poker in which only the top 17 cards of a 52--card deck are to be used, it might be demanded that the distribution of the identities and order of the top 17 cards of the deck were within 0.01 of uniform (Of course, this is not a reasonable shuffling scheme for a real deck of cards).

In this instance, $n=52$ and $k=17$, so $E(T) \leq 20.6$ and $\Var(T) \leq 4.3$. By Chebyshev's inequality, there is at most a 0.01 chance that $T$ is more than $E(T) + 10\sqrt{\Var(T)} \approx 41$. Therefore for this purpose, 41 random-to-top moves suffice, compared to the approximately $52\log 52 \approx 205$ required to get the state of the entire deck just to within 0.25 of uniform, 
\item The sets of cards in each quarter of the deck match once the top three quarters of the decks match. The coupling time is $$T \stackrel{d}{=} \G(1) + \G(\frac{n-1}{n}) + \cdots + \G(\frac{\frac{n}{4}+1}{n}).$$
\item The sets of cards in positions congruent to each $i$ modulo $4$ are not guaranteed to match until the entire decks are in the same configuration.
\item The parity of the permutations are not guaranteed to match until the entire decks are in the same order. This is a terrible bound, indicating only that the coupling was unsuited to this statistic. See Remark \ref{rem:badparity} for a better one.
\item The identity of the card immediately above the card labelled by $1$ matches as long as $1$ has been chosen. (The possible values of this statistic are $2$ to $n$, as well as a special value corresponding to the $1$ being on the top of the deck. If instead the definition of `previous card' were to wrap around, with the card above the top card being the bottom card, then this example would behave quite like the next).
\end{enumerate}

The coupling times so far considered in this section have been sums of independent geometric random variables. This will not always be the case. The reason it happens in these examples is that in each of them, the coupling is attempting to make matches. It either creates a match or does not, and the probability of creating a match depends only on the number of matches presently existing. It is also important that in all of these examples, matches are never destroyed. This is why it was necessary to check not only that the statistic matched at a certain time, but also that this would continue to be true after additional steps.

That is, in such examples, the coupling time is the hitting time of a relatively simple Markov chain. For example, to match the top four cards of the deck, the coupling time \[T \stackrel{d}{=} \G(\frac{4}{n})+\G(\frac{3}{n})+\G(\frac{2}{n})+\G(\frac{1}{n})\] is a hitting time for the Markov chain shown in Figure \ref{fig:topfourhitting}

In the next example, thinking of a coupling time as a hitting time enables the analysis of a more complicated Markov chain, where matches may be destroyed.

\begin{figure}
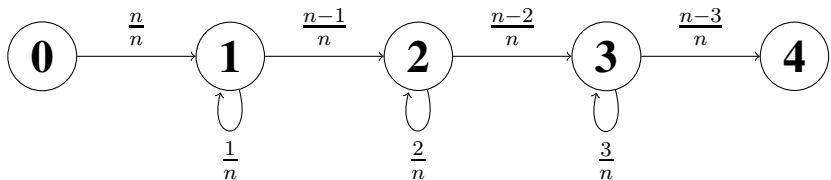

\figtopfourhitting
\caption[A Markov chain describing matching cards at the top of the deck]{This Markov chain describes the number of matching pairs of cards at the top of the deck when attempting to match the top four cards via random-to-top shuffles. The coupling time is the hitting time of the state $4$, starting from the state $0$.}
\label{fig:topfourhitting}
\end{figure}

Some more statistics:

\begin{enumerate}[resume]
\item The identities of the cards immediately below the $1$ match as long as the $1$ has been chosen, and if fewer than $n-1$ distinct labels have been chosen, the $1$ must have been chosen more recently than at least one of the other chosen labels (equivalently, the $1$ should not be on the bottom of the block of matching cards at the top of the deck). (As in the previous example, the possible values of this statistic are $2$ to $n$ and a special value corresponding to the $1$ being on the bottom of the deck). 

Notice that unlike the other statistics considered so far, it is possible that the coupling creates matches in this statistic and then breaks them again.

Unlike previous examples, this coupling time is not a sum of independent geometric random variables. To see why this is and how it may be analysed, consider running the coupling. The information needed to decide whether or not two copies of the chain have coupled is as follows
\begin{itemize}
\item How many distinct cards have been chosen
\item Whether or not the $1$ has been chosen
\item If the $1$ has been chosen, how many cards have been chosen and were last chosen before the last time the $1$ was chosen?
\end{itemize}

This information forms a quotient Markov chain, and understanding the behaviour of this chain suffices to understand the coupling time. Let $(k)$ denote the state where $k$ cards have been chosen, not including the $1$, and $(k,l)$ denote the state where $k$ cards have been chosen, including the $1$, and where $l$ of those cards were last chosen before the $1$ was. Equivalently, $l$ is the number of cards below the $1$ in the block of matching cards at the top of each deck. Figure \ref{fig:afteronehitting} illustrates this chain for $n=4$.

The goal is to understand the coupling time of the original chain. That is, after a certain number of steps, what is the probability that in each deck, the cards following the $1$'s are the same? If the quotient Markov chain is in state $(k,l)$ with $l > 0$ or $k = n$, then the two chains have coupled, so it suffices to understand the probability that after time $t$, the quotient chain is in such a state. When $n=4$, this is understanding the probability that if the Markov chain illustrated in Figure \ref{fig:afteronehitting} is started in the state $(0)$ then after $t$ steps it is at one of the blue states.

\begin{figure}
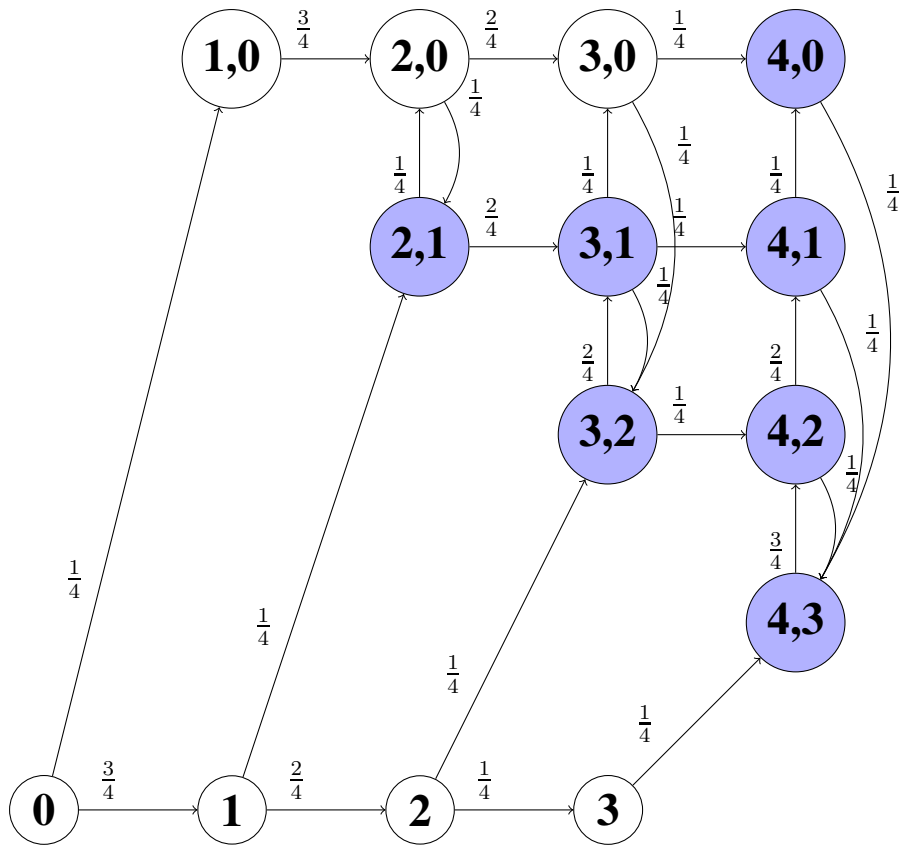

\figafteronehitting
\caption[A Markov chain recording necessary statistics for coupling]{For a deck of four cards, the Markov chain consisting of how many cards have been chosen, whether or not the $1$ has been chosen, and how many cards were last chosen before the $1$. Two copies of the random-to-top chain are coupled (for the identity of the card after the $1$) when this chain is in one of the blue states. Note that eventually this chain will settle into blue states forever, but before this time it is possible for it to enter a blue state and then leave again.}
\label{fig:afteronehitting}
\end{figure}

The following gives a sample bound on the coupling time when $n=52$. After $200$ steps, there is a $98\%$ chance that the $1$ has been chosen. As in Equations \ref{eq:couponfirstkex} and \ref{eq:couponfirstkvar}, after $200$ steps there is at least a 99\% chance that at least $42$ different cards have been chosen, so there is at least a $97\%$ chance that at least $42$ different cards have been chosen, including the $1$. In at most $\frac{1}{42}$ of paths leading to such outcomes, each other chosen card has been chosen after the $1$ last was. Thus with probability $\frac{41}{42}\cdot 0.97 \approx 95\%$, the cards immediately following the $1$ match after $200$ steps.

Two choices were made in this calculation --- the number of steps, but also to demand that at least $42$ distinct cards had been chosen. Changing these choices would produce slightly different bounds. 

\item The identities of the $k$ cards immediately after the card labelled by $1$ match as long as $1$ has been chosen, and if fewer than $n-1$ distinct labels have been chosen, $1$ must have been chosen more recently than at least $k$ of the other chosen labels (equivalently, $1$ should not be in the bottom $k$ cards of the block of matching cards at the top of the deck). This statistic takes values of ordered $k$--tuples, or smaller ordered tuples when the $1$ is close to the bottom of the deck.
\item The relative order of $1$ and $2$ matches as long as either label has been chosen. The coupling time is $$T \stackrel{d}{=} \G(\frac{2}{n}).$$
\item The relative order of $1$, $2$, ..., $k$ matches as long as all but one of these labels have been chosen. The coupling time is $$T \stackrel{d}{=} \G(\frac{k}{n}) + \G(\frac{k-1}{n}) + \cdots + \G(\frac{2}{n}).$$
\item The number of cards between $1$ and $2$ matches as long as both of these labels have been chosen. The coupling time is $$T \stackrel{d}{=} \G(\frac{2}{n}) + \G(\frac{1}{n}).$$ This is no better a bound than given for the stronger condition that the actual positions of cards $1$ and $2$ should match. It is unclear whether the weaker statistic mixes faster. That the bound is the same may be a weakness of the method, or it may be that a better coupling could be constructed.

\end{enumerate}
Some of these statistics are quotient chains of the random-to-top shuffle, and some are not, as follows:

These statistics are quotient chains. The identity of the top card, the identity and order of the top $k$ cards, the locations of any given set of cards, the parity of the permutation, and the relative order of a subset of cards.

These statistics do not form quotient chains:
\begin{itemize}
\item Identity of the second-to-top card. (Although this information is a subset of that contained in the identity and order of the top two cards, and that is a quotient chain)
\item Identity and order of the cards in any set $A$ of positions unless $A$ is a contiguous block at the top of the deck or $|A| = n-1$. (If the position $k$ is in $A$ but position $k-1$ is not, then if the card from position $k$ is moved to the top, it must be possible to deduce which card is now in position $k$ from only the information of which cards were in the positions in $A$. If besides $k-1$ there was another position not in $A$, those cards could be swapped without changing the available information, showing that this information is insufficient. If $A$ has size $n-1$ then the card in position $k-1$ is the only remaining card.)
\item The sets of cards in each quarter of the deck, either in blocks or interleaved.
\item The identities of the $k$ cards after a specific card.
\item Relative positions of a subset of cards. As commented above, the coupling does not treat this statistic any more specifically than just attempting to couple the exactly positions of those cards, and that is a quotient chain.
\end{itemize}

\begin{Remark}
\label{rem:badparity}
Notice that when $n$ is even, the parity of the permutation of the deck actually mixes perfectly in a single step, because exactly half of the moves correspond to multiplying by an odd permutation (and when $n$ is odd, it gets to within $\frac{1}{2n}$). However, using Proposition \ref{prop:coupling1} with the standard coupling of just choosing matching cards in each deck gives an upper bound of $n \log n$ steps for the mixing of this quantity, which is not at all good. This shows that the coupling used for the convergence of the chain need not be the best to use for the convergence of a statistic --- for permutation parity, for instance, there is a much better coupling which multiplies by either permutations of the same parity or of opposite parities, so that the resulting permutations are of the same parity. This gives that the parity mixes in one step.
\end{Remark}

In general, it is unclear whether upper bounds are bad because this shuffling technique is just not a good one for the statistic under consideration, or because the coupling was poorly chosen. For example, it is true that the random-to-top shuffle mixes the top card of the deck after a single step and takes many steps to mix the bottom card (at least $\frac{3}{4}n$ steps to get within $\frac{1}{4}$ of uniform, because it is impossible to get the original top card into the bottom quarter of the deck in fewer steps.), so some random walks are more suited to some statistics than to others. On the other hand, the previous example regarding permutation parity shows that a coupling may be ill-suited to a particular statistic, even if it gives a good bound for the convergence of the chain itself.

\subsection{Inverse riffle shuffles}
\label{sec:inverserifflestats}

As with the previous section, Proposition \ref{prop:coupling2} gives upper bounds on the mixing times of some statistics on $S_n$ under inverse riffle shuffles, which are modelled by assigning independent bits to each card and then sorting by those bits, breaking ties by the original order of the cards. Multiple steps of this process may be seen as assigning several bits, and sorting by the resulting base--$2$ string. 

Two instances of the inverse riffle shuffle may be coupled by, for each label, assigning that card either a $0$ or a $1$ independently with probability $\frac{1}{2}$, and making the same choice in each deck for the cards of that label. The two processes will agree when each pair of cards have been assigned different labels by at least one step.

It will be necessary to consider the strings assigned to the various cards. A subset of cards is considered to have distinct labels if each card in the subset has a different label, and unique labels if those labels are also not repeated among the remainder of the cards. 

\begin{Remark}
\label{rem:backwardsstrings}
These strings are growing right-to-left --- that is, least significant digit first. Taking a fixed number of steps of this chain, it will sometimes be convenient to consider the last step first, so that the most extreme changes in position are dealt with first, and the order is gradually refined with less and less impactful moves.
\end{Remark}

The coupling times in this section will not be sums of independent geometric random variables as they were in the previous section --- heuristically, there seemed to be something one-dimensional about most of the examples for the the random-to-top chain, where progress was only made in one direction, and it was possible to check how long it would take for each step, until the chains had coupled. Inverse riffle shuffles, on the other hand, change the positions of most of the cards at the same time. 

Analysis of coupling times using this coupling will require the treatment of an associated family of combinatorial problems regarding the strings assigned to the cards. Which cards have unique strings? Which positions contain cards with unique strings? The first results will be in answer to these questions, and these will be used to analyse some statistics on $S_n$.

\begin{Proposition}
\label{prop:matchcard}
For any card, the expected number of cards with the same string as this card after $t$ steps is $\frac{n-1}{2^t}$.
\end{Proposition}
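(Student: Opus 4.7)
The plan is to use linearity of expectation together with the independence of the bit-assignments across cards. First I would recall the coupling construction: over $t$ steps, each card is independently assigned a uniformly random $t$-bit string, with the same label-to-string assignment used in both decks. For the purposes of this proposition only a single deck is relevant, so the strings on the $n$ cards are $n$ i.i.d.\ uniform elements of $\{0,1\}^t$.

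Fix the card $c$ referenced in the statement, and let $s_c$ be its assigned $t$-bit string. For every other card $c'$, let $I_{c'}$ be the indicator that $c'$ has the same string as $c$, i.e.\ $I_{c'} = \mathbf{1}\{s_{c'} = s_c\}$. Conditional on $s_c$, the string $s_{c'}$ is still uniform on $\{0,1\}^t$, so $\pr{I_{c'} = 1} = 2^{-t}$, and hence $\E{I_{c'}} = 2^{-t}$ unconditionally.

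Let $N$ denote the number of cards with the same string as $c$; by the convention implicit in the statement this counts the $n-1$ other cards (excluding $c$ itself). Then $N = \sum_{c' \neq c} I_{c'}$, and linearity of expectation gives
\[
\E{N} = \sum_{c' \neq c} \E{I_{c'}} = (n-1)\cdot 2^{-t} = \frac{n-1}{2^t},
\]
as claimed.

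There is no real obstacle here; the only thing to be careful about is the indexing convention (whether $c$ is included among the "cards with the same string"), and making it explicit that we only need pairwise rather than mutual independence of the string assignments, which is immediate from the coupling's construction of assigning each label its bits independently.
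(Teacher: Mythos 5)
Your proof is correct and takes essentially the same route as the paper's one-line argument: the paper simply observes that two uniformly random length-$t$ binary strings agree with probability $2^{-t}$ and there are $n-1$ other cards, which is exactly the linearity-of-expectation computation you spell out with indicators.
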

\begin{proof}
The probability that two uniformly random binary strings of length $t$ are equal is $\frac{1}{2^t}$, and there are $n-1$ other cards.
\end{proof}

This result may be generalised:

\begin{Proposition}
\label{prop:matchcard2}
Let $A$ be a set of $N$ pairs of labels. After $t$ steps, the expected number of these pairs of cards which have the same string is $\frac{N}{2^t}$
\end{Proposition}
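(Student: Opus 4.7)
The plan is to reduce this immediately to Proposition \ref{prop:matchcard} via linearity of expectation. The key observation is that the quantity in question is a sum of indicator random variables, one for each of the $N$ pairs in $A$, indicating whether that particular pair of cards was assigned the same length-$t$ binary string by the coupling.

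First I would set up notation: for each pair $\{a,b\} \in A$, let $I_{\{a,b\}}$ be the indicator that cards $a$ and $b$ have been assigned identical strings after $t$ steps. The quantity whose expectation we want is $\sum_{\{a,b\} \in A} I_{\{a,b\}}$.

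Next I would compute $\expect{I_{\{a,b\}}}$. Under the coupling described in the section, each card is assigned a uniformly random $t$-bit string, independent of the strings assigned to the other cards. So for any fixed pair $\{a,b\}$, the probability that the two strings agree is $2^{-t}$, exactly as in the proof of Proposition \ref{prop:matchcard}. Hence $\expect{I_{\{a,b\}}} = 2^{-t}$.

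Finally, linearity of expectation gives
\[
\expect{\sum_{\{a,b\} \in A} I_{\{a,b\}}} = \sum_{\{a,b\} \in A} \expect{I_{\{a,b\}}} = N \cdot 2^{-t} = \frac{N}{2^t},
\]
as claimed. There is no real obstacle here --- the pairs in $A$ need not be disjoint and need not involve distinct cards across pairs, but linearity of expectation applies regardless of any dependence among the indicators, so the argument goes through without modification. The statement should really be viewed as emphasizing that the first moment is insensitive to the structure of $A$, which is presumably the point the author wants to highlight before moving on to variance-type or higher-moment arguments where such dependence would matter.
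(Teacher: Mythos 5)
Your proof is correct and is essentially the paper's argument made explicit: the paper simply says the proof is the same as Proposition \ref{prop:matchcard} (each pair of independent uniform $t$-bit strings matches with probability $2^{-t}$, and linearity of expectation handles the sum over the $N$ pairs), which is exactly the indicator-variable calculation you wrote out.
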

\begin{proof}
The proof is the same as that of Proposition \ref{prop:matchcard}.
\end{proof}

Perhaps surprisingly, the behaviour of the number of strings matching the card in a certain position (rather than the card of a certain value) behaves differently. This is because the position depends on the sorting, which depends on the assigned strings.

Exact calculations for this statistic are not included, but the following argument gives a heuristic for the scaling. 

\begin{Proposition}
\label{prop:matchposition}
For some fixed position $i$, let $A_t$ be the expected number of cards with the same string as the card in position $i$ after $t$ steps. Let $q$ be any real number greater than $\frac{1}{2}$. Then there is a constant $c$ depending on $q$ so that $A_t < cq^t(n-1)$.
\end{Proposition}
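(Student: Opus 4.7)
My plan is to express $A_t$ as a sum of pairwise collision probabilities via order statistics, bound each by a union bound on monochromatic subsets of cards, and combine with the trivial bound $A_t\le n-1$ to yield the required decay. Under the coupling, each card receives an independent uniform $t$-bit string; after stable sorting, positions $k$ and $i$ lie in the same block iff the $k$-th and $i$-th order statistics $s_{(k)}$ and $s_{(i)}$ coincide. Hence
\[
A_t \;=\; \sum_{k \neq i} \pr{s_{(k)} = s_{(i)}}.
\]

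For $d = |k-i| \ge 1$, the event $\{s_{(i)} = s_{(i\pm d)}\}$ forces some string value to be shared by at least $d+1$ cards (a block containing both endpoints must have size $\geq d+1$). A union bound over the $\binom{n}{d+1}$ subsets of that size---each monochromatic with probability $2^{-td}$---gives $\pr{s_{(i)} = s_{(i\pm d)}} \le \binom{n}{d+1} 2^{-td}$, so
\[
A_t \;\le\; 2\sum_{d \ge 1} \binom{n}{d+1} 2^{-td} \;=\; 2^{t+1}\bigl[(1+2^{-t})^n - 1 - n/2^t\bigr] \;\le\; \frac{n^2}{2^t}\, e^{n/2^t},
\]
using the elementary inequality $(1+x)^n - 1 - nx \le (nx)^2 e^{nx}/2$. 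In the regime $n \le 2^t$ this is $O(n^2/2^t)$; for $n > 2^t$ the trivial bound $A_t \le n-1$ is stronger. Thus $A_t \le \min(n-1,\, en^2/2^t)$ for all $n,t$.

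To match the claimed form, fix $q > 1/2$, so that $2q > 1$. The ratio $A_t/(q^t(n-1))$ is bounded above by $\min\bigl(q^{-t},\; \tfrac{en}{n-1}(2q)^{-t}\bigr)$: the first grows as $q^{-t}$ but is only active up to the transition around $t \sim \log_2(en)$, and thereafter the second takes over and decays since $2q>1$. Its supremum over $t \ge 0$ is finite---of order $n^{\log_2(1/q)}$---so setting $c$ equal to this supremum (depending on $q$ and $n$ but not $t$) yields $A_t < cq^t(n-1)$.

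The main obstacle is the looseness of the union bound in the second step: it overcounts configurations with large blocks and delivers $O(n^2/2^t)$ rather than the presumably optimal $O(n/2^t)$ that the heuristic of block-size concentration around $n/2^t$ suggests. This is precisely why $c$ depends on $n$ and why the argument only yields any $q > 1/2$ (with a polynomial-in-$n$ constant) rather than $q = 1/2$ (with an absolute constant). A finer analysis using the joint multinomial distribution of block sizes, or a direct evaluation of $\mathbb{E}[N_{S^*(i)}]$ via the exact formula $\tfrac{n}{2^t}\bigl[1 + \sum_{\sigma=1}^{2^t-1} \pr{\text{Bin}(n-1,\sigma/2^t)=i-1}\bigr]$, would plausibly close this gap.
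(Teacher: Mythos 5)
Your approach is genuinely different from the paper's, but it does not establish the proposition as stated, for exactly the reason you yourself flag at the end. The statement requires a constant $c$ depending only on $q$; your bound $A_t \le e n^2/2^t$ leads to a $c$ of order $n^{\log_2(1/q)}$, which is unbounded in $n$ for every admissible $q<1$. This is not a looseness that can be absorbed into the constant --- it changes the $n$-dependence --- so the argument as written proves a strictly weaker statement. The source of the extra factor of $n$ is the union bound over $\binom{n}{d+1}$ monochromatic subsets: conditioning on $s_{(i)} = s_{(i\pm d)}$ does not permit an arbitrary $(d{+}1)$-subset to be the shared block; the block must actually contain position $i$ after sorting, and pair-counting throws away that constraint. (There is also a small slip in the final paragraph: the second term in your minimum should be $\tfrac{en^2}{n-1}(2q)^{-t}$ rather than $\tfrac{en}{n-1}(2q)^{-t}$, though this does not affect your own diagnosis.)

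The paper takes the block-process view you gesture at in your last sentence. Working with the most significant bit first, it tracks the size $K_t$ of the block of cards sharing the string at position $i$, and observes that one step splits this block binomially, with the worst case for position $i$ being that it lands in the larger half. That replaces $k$ by roughly $k/2 + O(\sqrt{k})$; once $k$ exceeds a threshold $k_0$ depending only on $q$, this is below $qk$, and the residual for $k<k_0$ is swept into $c$. Because $k_0$ is a function of $q$ alone, the resulting constant is $n$-free, which is what the proposition asserts. Your computation is clean and the regime analysis ($n\le 2^t$ versus $n> 2^t$) is correct as far as it goes, but to recover the stated claim you would need either a sharper replacement for the union bound that respects the order-statistic constraint, or to switch to the dynamic block-size argument.
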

\begin{proof}
For this proposition, consider the most significant bit to be assigned first, as in Remark \ref{rem:backwardsstrings}.

This proposition is subtly different from the previous one, demonstrated with the following example. Consider a deck of four cards, and examine the number of cards with the same string as the card second from the top, whichever card this may be. Before any digits have been assigned, this is four. Now, when the first digit of each string is assigned, there are probabilities of 1, 4, 6, 4 and 1 sixteenths that 0,1,2,3 or 4 ones are assigned, respectively. This results in the number of cards sharing a string with the second card being 4,3,2,3 or 4, respectively --- this is different from the number of cards matching a specific card, which would be distributed binomially.

This differs from the behaviour of Proposition \ref{prop:matchcard} because the identity of the card in any given position depends on the assigned strings. Fortunately, the impact of this change is not particularly large, as will now be shown.

Assume that there are $k$ cards with strings matching the string assigned to the card in the $m$th position. After one step, these cards have been split binomially. The worst case (for there to be as many matches as possible) is for the $k$th position to be in the larger of the two blocks. This happens if it was central in the initial block of matching cards --- that is, the $k$ cards with the same string were in positions $m - \frac{k-1}{2}$ to $m + \frac{k-1}{2}$. 

So the number of cards with the same string as the card in any given position decays faster than the following process:
\begin{itemize}
\item Start with $n$
\item Repeatedly replace the current number $k$ with $k - r$, where $r$ is obtained by splitting $k$ into two binomial pieces and choosing the smaller.
\end{itemize}

This process iteratively replaces $k$ by $\frac{k}{2} + O(\sqrt{k})$, using Chebyshev's inequality on the binomial distribution. For large enough $k$, this decreases faster than replacing $k$ with $qk$. Let the constant $c$ be the difference for values of $k$ smaller than this. 
\end{proof}

It seems likely that the bound of Proposition \ref{prop:matchposition} could be improved to $A_t \leq c\frac{n-1}{2^t}$.

\begin{Example}
\label{ex:invrifflefeatures}
Consider the following statistics on $S_n$, to be studied via the inverse riffle shuffle. The point of these examples is that Proposition \ref{prop:coupling2} has reduced a mixing time problem to analysis of a coupling time. Estimates of the coupling times are given for some of the examples, but detailed analysis is not the goal of this section. Those examples involving the positions of certain cards or identities of cards in certain locations are known results --- see \cite{ADS} and \cite{repeatedcards1}, where these problems are analysed in greater detail.
\begin{enumerate}
\item The top card matches once the lexicographically first string is distinct from all others (equivalently, from the second). 

\begin{Proposition}
\label{prop:topcard}
The mixing time $\tme$ for the identity of the top card is at most $\log_2(n-1)-\log_2(\eps)$.
\end{Proposition}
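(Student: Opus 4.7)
The plan is to apply Proposition \ref{prop:coupling2} with the natural coupling from the start of Section \ref{sec:inverserifflestats}, in which cards with the same label receive identical bit assignments in both decks. After $t$ steps, each card $c$ carries an iid uniform $t$-bit string $\sigma(c)$, shared across both decks, and each deck is sorted by $\sigma$ with ties broken by its own (distinct) original order. The top cards of the two decks agree whenever the lex-smallest value of $\sigma$ is attained by a unique card, so it suffices to bound $P(|U|\ge 2)$ where $U=\{c:\sigma(c)=\min_{c'}\sigma(c')\}$.

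I would estimate this by a first-moment argument based on Proposition \ref{prop:matchcard}: for any fixed card $c$, the expected number of other cards with $\sigma(c')=\sigma(c)$ is $(n-1)/2^t$. Informally, applied to the minimizing card $c^{*}\in U$, the expected number of cards tied for the minimum besides $c^{*}$ is at most $(n-1)/2^t$, so Markov's inequality would give $P(|U|\ge 2)\le (n-1)/2^t$. Combined with Proposition \ref{prop:coupling2}, this bounds the total variation distance between the distribution of the top card and the uniform distribution by $(n-1)/2^t$, which is $\le \eps$ precisely when $t\ge\log_2(n-1)-\log_2\eps$.

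The main technical subtlety is that $c^{*}$ depends on the random strings, so Proposition \ref{prop:matchcard} (stated for a fixed card) does not apply literally. The cleanest workaround is to compute $P(|U|=1)$ directly by conditioning on the value of the minimum: setting $N=2^t$, summing over which card is the unique minimizer and over the value it takes gives
\[
P(|U|=1)=\sum_{s=0}^{N-1} n\cdot \frac{1}{N}\cdot\left(\frac{N-s-1}{N}\right)^{n-1}=\frac{n}{N^{n}}\sum_{k=0}^{N-1} k^{n-1}.
\]
The integral estimate $\sum_{k=0}^{N-1}k^{n-1}\ge (N-1)^{n}/n$ then yields $P(|U|\ge 2)\le 1-(1-1/N)^{n}\le n/N$, which matches the claim up to the distinction between $n$ and $n-1$; a slightly sharper Faulhaber-type bound on the partial sum recovers the precise $(n-1)/2^{t}$ stated. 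This is the step I expect to require the most care, since the naive union bound over pairs of cards $\binom{n}{2}/2^{t}$ gives only a $\log_{2}n(n-1)$-type bound, which is noticeably weaker than the claim.
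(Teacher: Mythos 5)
Your proof is correct in outline but takes a genuinely different route from the paper's. The paper argues iteratively: with $m(t)$ the number of cards sharing the lexicographically smallest $t$-bit string, it shows $\condexp{m(t+1)-1}{m(t)}\le\tfrac12\left(m(t)-1\right)$ (the tied block of size $k$ splits binomially, the new minimum is the $0$-sub-block, and the all-ones edge case contributes only an extra $k2^{-k}\le\tfrac12$), so $\E{m(t)-1}\le(n-1)/2^t$ and Markov's inequality finishes. This step-by-step conditioning is exactly what resolves the selection-bias issue you correctly identify --- that Proposition \ref{prop:matchcard} cannot be applied verbatim to the random minimizer $c^*$ --- but you do not spot that fix and instead pivot to an exact computation of $P(|U|=1)$. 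That is a valid alternative: your formula $P(|U|=1)=\frac{n}{N^n}\sum_{k=0}^{N-1}k^{n-1}$ is correct, and the integral bound gives $P(|U|\ge 2)\le n/2^t$, hence $\tme\le\log_2 n-\log_2\eps$, a hair weaker than the stated $\log_2(n-1)-\log_2\eps$. You defer the sharper constant to an unproved ``Faulhaber-type'' bound; that claim is in fact true (the trapezoid correction $\sum_{k=1}^{M}k^{n-1}\ge M^n/n+M^{n-1}/2$ followed by a short polynomial check yields $P(|U|=1)\ge 1-(n-1)/N$), but as written this step is left unfilled. The paper's one-step halving argument is shorter and produces the constant immediately; your route buys an exact closed form for $P(|U|=1)$, which is more than the proposition needs.
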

\begin{proof}
If $m(t)$ is the number of strings equal to the first after $t$ steps, then $$\E{m(t+1)-1} < \frac{1}{2}(m(t)-1).$$ The quantity $m(0)$ is $n-1$, so after $\log_2(n-1)-\log_2(\eps)$ steps, there is at least a $1 - \eps$ chance that $m(t)$ is zero and so the top cards match. (This is better than would be given by Proposition \ref{prop:matchposition} because the first position is always at the top of its block, so is in the smaller piece exactly half the time)
\end{proof}
\item The second-to-top card matches once the second string (and hence also the first) is distinct from all others. 
\begin{Proposition}
The mixing time $\tme$ for the identity of the second card is at most $\log_2(n-1)+1-\log_2(\eps)$.
\end{Proposition}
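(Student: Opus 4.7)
Proof proposal: I would mirror the strategy of Proposition \ref{prop:topcard}, using a union bound. Write $B_i(t)$ for the $i$-th block in the string-sorted order after $t$ coupled inverse riffle shuffles (the cards sharing the $i$-th smallest string). The second-to-top cards in the two decks agree precisely when $|B_1(t)| = |B_2(t)| = 1$: in that case position 1 is occupied by the unique card with the smallest string and position 2 by the unique card with the second-smallest string, independent of any tie-breaking by prior order, hence the same in both decks. Since the ``only one block'' case $|B_2(t)| = 0$ forces $|B_1(t)| = n \geq 2$, the failure event is contained in $\{|B_1(t)| \geq 2\} \cup \{|B_2(t)| \geq 2\}$, and so
\[
\pr{\text{second card fails to match}} \leq \pr{|B_1(t)| \geq 2} + \pr{|B_2(t)| \geq 2}.
\]

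By Proposition \ref{prop:topcard}, the first term is at most $(n-1)/2^t$. The main work is to show the same bound $\pr{|B_2(t)| \geq 2} \leq (n-1)/2^t$ for the second block; the two together give $\pr{\text{failure}} \leq 2(n-1)/2^t$, which is at most $\eps$ exactly when $t \geq \log_2(n-1) + 1 - \log_2(\eps)$.

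To bound $\pr{|B_2(t)| \geq 2}$, I would set up a recursion in $t$ (and $n$) analogous to the expectation-halving calculation in Proposition \ref{prop:topcard}. Treating the MSB as being assigned first, condition on how many cards $k$ receive MSB 0. For $k \geq 2$, the full deck's $B_2(t)$ coincides with the second block of the size-$k$ MSB-0 sub-deck after the remaining $t-1$ steps (except in the low-probability edge case that those $k$ cards share all their remaining bits, in which case $B_2(t)$ becomes the top block of the size-$(n-k)$ MSB-1 sub-deck); for $k \leq 1$ the problem shifts effectively into the MSB-1 sub-deck, again consuming one bit. Inducting on $t$, each branch either calls the inductive hypothesis on a smaller sub-deck with $t-1$ steps or invokes Proposition \ref{prop:topcard} on such a sub-deck; averaging over $k \sim \mathrm{Bin}(n,1/2)$ using $\sum_{k=2}^{n-1}(k-1)\binom{n}{k}2^{-n} = (n-2)(1/2 - 2^{-n})$ yields the desired $(n-1)/2^t$ bound with room to spare for the lower-order edge-case contributions.

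The main obstacle is precisely this second-block step. Unlike $B_1$, the identity of $B_2$ can switch discontinuously when a new bit is revealed --- from a refinement of old $B_1$ to the head of old $B_2$, or vice versa --- so a direct step-by-step conditional recursion on $|B_2(t)|$ accumulates correction terms and only yields a weaker bound like $(n-1)/2^{t-1}$, losing a constant factor per step. The MSB-conditioning decomposition sidesteps this by reducing each step to sub-deck problems to which Proposition \ref{prop:topcard} already applies, and the extra $+1$ in the final bound precisely reflects the one additional step needed to first separate the top card out before the second-block analysis reduces to a sub-deck version of Proposition \ref{prop:topcard}.
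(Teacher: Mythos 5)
Your high-level reduction is fine --- the second-to-top card agrees in the two coupled decks once the second-smallest string is unique, and the complementary event is indeed contained in $\{|B_1(t)|\ge 2\}\cup\{|B_2(t)|\ge 2\}$. But the route you take differs from the paper's, and the crucial step does not close. The paper tracks the \emph{block containing position $2$} directly (as in Proposition \ref{prop:matchposition}), using the structural fact that this block can land in the larger half at most once --- the step on which position $1$ separates from position $2$ --- so its expected excess size decays like $2(n-1)/2^t$, giving the ``$+1$'' in a single Markov bound with no union bound at all. Your union bound instead forces you to prove the genuinely tight inequality $\pr{|B_2(t)|\ge 2}\le (n-1)/2^t$, with no slack to absorb error terms, since $\pr{|B_1(t)|\ge 2}\le (n-1)/2^t$ already uses up half of the $\eps$ budget.

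The gap is in the MSB-conditioning induction. Feeding the inductive hypothesis $f(k,t-1)\le (k-1)/2^{t-1}$ for $2\le k\le n$, the Proposition \ref{prop:topcard} bound $(n-2)/2^{t-1}$ for the $k=1$ branch, and your sum $\sum_{k=2}^{n-1}(k-1)\binom{n}{k}2^{-n}=(n-2)(\tfrac12-2^{-n})$ into the recursion yields
\[
f(n,t)\;\le\;\frac{1}{2^{t-1}}\Bigl[\tfrac{n-2}{2}+n(n-1)2^{-n}\Bigr],
\]
and requiring this to be at most $(n-1)/2^t$ is exactly the condition $n(n-1)2^{-n}\le \tfrac12$. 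That fails for $n=3,4,5$ (for instance $n=3$ gives $6/8=3/4$). So the claim that the calculation ``yields the desired $(n-1)/2^t$ bound with room to spare for the lower-order edge-case contributions'' is not correct: there is no room to spare, and the edge cases only make things worse. (The target bound itself appears to hold at these small $n$ --- direct computation for $n=3$ gives $f(3,t)=3(2^t-1)/2^{2t+1}<2/2^t$ --- but it does not follow from the induction as written.) To repair this you would either need to verify the small-$n$ cases by hand and only run the induction for $n\ge 6$, or, better, abandon the union bound and track position $2$'s block directly as the paper does, where the built-in factor-of-$2$ slack is exactly what the ``$+1$'' in the statement is for.
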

\begin{proof}
Proposition \ref{prop:matchposition} suggests that this should take $\log_2(n) + c$ steps. This can be improved because the second position is near the top of the deck, so may only be in the smaller piece exactly once more than average.
\end{proof}
\item The $k$th-from-top card matches once the $k$th string is distinct from all others (equivalently, from the $(k-1)$th and $(k+1)$th). Proposition \ref{prop:matchposition} suggests that this takes $\log_2(n)+c$ steps.
\item The set of the top $k$ cards matches once the $k$th and $(k+1)$th strings are distinct. Again, Proposition \ref{prop:matchposition} suggests that this takes $\log_2(n)+c$ steps.
\item The identity and order of the top $k$ cards match once the top $(k+1)$ strings are all different. 
\begin{Proposition}
The mixing time $\tme$ for the identity and order of the top $k$ cards is $\log_2(n) + \log_2(k) - \log_2(\eps)$.
\end{Proposition}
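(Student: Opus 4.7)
The plan is to combine Proposition \ref{prop:coupling2} with a Markov inequality applied to the expected number of ``bad'' pairs. Under the coupling of Section \ref{sec:inverserifflestats}, the top $k$ cards of the two decks agree in identity and order precisely when the $k+1$ smallest assigned strings are pairwise distinct --- equivalently, when no two of the cards sitting in the top $k+1$ positions share an assigned string. I let $N_t$ count the unordered pairs of cards that match by step $t$ and that both land in the top $k+1$ positions; then coupling failure entails $N_t \geq 1$, so Proposition \ref{prop:coupling2} reduces the claim to showing $\E{N_t} \leq \eps$ at the given $t$.

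By linearity of expectation and the symmetry of the coupling over labels,
\[ \E{N_t} = \binom{n}{2} \cdot \frac{1}{2^t} \cdot q_t, \]
where the factor $1/2^t$ is the per-pair match probability from Proposition \ref{prop:matchcard}, and $q_t$ is the conditional probability that both members of a matched pair occupy positions in the top $k+1$. It then remains to bound $q_t$.

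The cleanest route to $q_t \leq k/(n-1)$ is to treat a matched pair as a single ``super-item''; together with the $n-2$ other cards this yields $n-1$ items with i.i.d.\ uniform strings, so by exchangeability the super-item's rank among them is uniform on $\{1, \ldots, n-1\}$. When that rank is $Q$, the pair occupies full-sort positions $Q$ and $Q+1$, which both lie in the top $k+1$ iff $Q \leq k$. I expect the exchangeability step to be the main obstacle, because a third card that also matches the pair's string creates a larger block with random tie-breaking, so the clean ``two consecutive positions'' picture is only exact in the tie-free case. I would handle this by conditioning on the pair's shared string $s$, using that the number of other cards with string smaller than $s$ is binomial, and averaging over $s$; the contribution from additional collisions with $s$ is $O(n/2^t)$, small enough to absorb into the constant or treat with a separate Markov estimate.

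Combining everything, $\E{N_t} \leq \binom{n}{2} \cdot \frac{1}{2^t} \cdot \frac{k}{n-1} = \frac{nk}{2 \cdot 2^t}$. Choosing $t = \log_2(n) + \log_2(k) - \log_2(\eps)$, so that $2^t = nk/\eps$, gives $\E{N_t} \leq \eps/2$, comfortably within the desired bound.
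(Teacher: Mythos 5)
Your proposal is correct and takes a genuinely different route from the paper. The paper argues in two phases: roughly $\log_2(n/k)$ steps to produce a top block of about $k$ cards whose strings are distinct from all others (invoking the heuristic of Proposition~\ref{prop:matchposition}), followed by roughly $2\log_2(k)$ further steps so that all pairs within that block receive distinct strings (Proposition~\ref{prop:matchcard2}). You instead run a single first-moment computation: count pairs of cards that both share a string and both land in the top $k+1$ positions, treat a matched pair as one super-item whose rank among the $n-1$ items is uniform by exchangeability, and conclude $\E{N_t} \leq nk/2^{t+1}$; Markov's inequality finishes. This collapses the two phase lengths into one line, makes the $\log_2 n + \log_2 k - \log_2\eps$ formula fall out directly, and avoids having to make precise what ``a block of size slightly larger than $k$'' means. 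The tie-breaking issue you flag is real --- the super-item's rank is only uniform modulo collisions, and a collision means the pair need not occupy two consecutive positions --- but you have correctly pegged its size at $O(n/2^t) = O(\eps/k)$ at the target $t$, and your spare factor of two absorbs it. The paper's own proof is at least as informal (``after about,'' ``slightly larger than $k$,'' and Proposition~\ref{prop:matchposition} is itself stated only as a heuristic scaling), so your level of rigor is comparable; if you want to tighten yours, conditioning on the shared string value and handling the tie contribution by a separate Markov estimate, as you sketch, would do it.
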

\begin{proof}
After about $\log_2(\frac{n}{k})$ steps, there is a block of cards at the top of size slightly larger than $k$ with strings distinct from all others. Then by Proposition \ref{prop:matchcard2} with $A$ being the set of all pairs of those cards, after approximately another $2\log_2(k)+c$ steps, the probability that these cards will all have distinct strings is greater than $1-\frac{1}{2^c}$. So $\log_2(n) + \log_2(k) - \log_2(\eps)$ steps are enough.
\end{proof}
\item The location of the $1$ matches once the string assigned to that card is distinct from all others. 
\begin{Proposition}
\label{prop:locof1}
The mixing time $\tme$ for the location of the $1$ satisfies $$\tme \leq \log_2(n-1)-\log_2(\eps).$$
\end{Proposition}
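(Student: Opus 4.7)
The plan is to apply Proposition \ref{prop:coupling2} with the standard inverse-riffle coupling described at the start of Section \ref{sec:inverserifflestats}, and show that the coupling succeeds (for the location of the $1$) with high probability once the string assigned to the card labelled $1$ is distinct from every other card's string.

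First I would verify the claim that matches the statement of the proposition to a coupling-time bound: after $t$ steps, each card carries a uniformly random $t$-bit string, and cards in both decks are sorted by this string (with ties broken by the pre-existing order). If the string attached to card $1$ is strictly different from all $n-1$ other strings, then the position of card $1$ is determined entirely by how many other strings are lexicographically smaller, independent of how ties among other cards are broken. Since the coupling assigns the same string to the card labelled $1$ in both decks, and likewise for every other label, the number of strings less than card $1$'s string is the same in both decks. Hence in this event the location of $1$ agrees in the two copies, so coupling for the statistic has occurred.

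Next I would bound the probability that the string on card $1$ is \emph{not} unique. For any other fixed card, the probability its $t$-bit string equals the string on card $1$ is $2^{-t}$, since the two strings are independent uniform elements of $\{0,1\}^t$. A union bound over the $n-1$ other cards yields
\[
\pr{\text{card } 1\text{'s string is not unique after }t\text{ steps}} \leq \frac{n-1}{2^t}.
\]
Thus with probability at least $1 - (n-1)2^{-t}$, the coupling for the location of $1$ has succeeded by time $t$. Applying Proposition \ref{prop:coupling2} with $p = 1 - (n-1)2^{-t}$, the total variation distance between the distribution of the location of $1$ after $t$ steps and its stationary distribution is at most $(n-1)2^{-t}$. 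Setting this to be at most $\eps$ and solving for $t$ gives $t \geq \log_2(n-1) - \log_2(\eps)$, as required.

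There is no real obstacle here; the only point worth being careful about is the sorting-determinism argument in the first step, i.e.\ checking that a unique string for card $1$ really does pin down its position identically in both decks regardless of what happens to the other cards (including ties among them). Once that observation is made, the remainder is just the union bound and an appeal to Proposition \ref{prop:coupling2}.
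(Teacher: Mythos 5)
Your proof is correct and matches the paper's approach: the paper cites Proposition \ref{prop:matchcard2} with $A$ the $n-1$ pairs containing card $1$, giving expected number of colliding pairs $(n-1)/2^t$, and then implicitly applies Markov's inequality, which is exactly your union bound. Your extra paragraph checking that a unique string for card $1$ pins down its position in both decks is a detail the paper leaves implicit, but otherwise the argument is the same.
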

\begin{proof}
Use Proposition \ref{prop:matchcard2} with $A$ being the set of pairs including $1$.
\end{proof}
\item The locations of $k$ specific cards match once the strings assigned to each are distinct from all others. 
\begin{Proposition}
\label{prop:locofk}
The mixing time $\tme$ for the locations of any $k$ specific cards is at most $\log_2(n) + \log_2(k)-\log_2(\eps)$.
\end{Proposition}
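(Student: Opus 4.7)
The plan is to apply Proposition \ref{prop:coupling2} with the standard coupling of inverse riffle shuffles, and reduce everything to counting expected pairwise string collisions, exactly as in Proposition \ref{prop:locof1}, but now with the set $A$ taken to be all pairs that involve at least one of the $k$ distinguished cards.

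First I would observe the coupling condition: under the coupling where each label receives the same bit string in both decks, the locations of the $k$ specified cards agree in both decks after $t$ steps provided that each of those $k$ cards has been assigned a $t$-bit string that differs from every other card's string (and, in particular, from each of the other $k-1$ specified cards' strings). Indeed, if the strings of the $k$ distinguished cards are all distinct from the remaining $n-k$ and from each other, then the relative rank of each distinguished card in the sort is the same in both decks, hence its final position is the same.

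Next I would bound the failure probability. Let $A$ be the set of unordered pairs $\{x,y\}$ of distinct labels such that at least one of $x,y$ is a distinguished card. Then
\begin{equation*}
|A| = k(n-k) + \binom{k}{2} \leq kn.
\end{equation*}
By Proposition \ref{prop:matchcard2}, the expected number of pairs in $A$ whose assigned strings coincide after $t$ steps is $|A|/2^t \leq kn/2^t$. Markov's inequality then gives that the probability at least one such pair collides is at most $kn/2^t$. Setting $kn/2^t \leq \eps$ yields $t \geq \log_2 n + \log_2 k - \log_2 \eps$, so after that many steps the coupling has matched the locations of all $k$ distinguished cards with probability at least $1-\eps$. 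Applying Proposition \ref{prop:coupling2} then gives the claimed bound on $\tme$.

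There is no real obstacle — this is a clean application of Markov's inequality combined with Proposition \ref{prop:matchcard2}, once one is careful that the coupling condition requires distinctness not only between the $k$ distinguished cards and the remaining $n-k$ but also among the distinguished cards themselves; this is handled by using the full set $A$ defined above rather than only the $k(n-k)$ mixed pairs. The only minor point is to note that $k(n-k) + \binom{k}{2} \leq kn$, which absorbs cleanly into the stated bound $\log_2(n) + \log_2(k) - \log_2(\eps)$.
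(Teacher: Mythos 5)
Your proof is correct and takes essentially the same approach as the paper: both use Proposition \ref{prop:matchcard2} with $A$ being the set of pairs involving at least one of the $k$ specified cards (so $|A| = k(n-k) + \binom{k}{2} = nk - \binom{k+1}{2} \leq nk$), and bound the probability of a coupling failure by the expected number of colliding pairs via Markov's inequality. You spell out the Markov step more explicitly than the paper's terse statement, but the argument is the same.
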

\begin{proof}
Use Proposition \ref{prop:matchcard2} with $A$ being the set of pairs including any of these cards. This gives that the time taken until the expected number of matches is below $\frac{1}{4}$ is at most $$\log_2(nk - \binom{k+1}{2}) -\log_2(\eps) \leq \log_2(n) + \log_2(k) -\log_2(\eps).$$
\end{proof}
\item The bridge hands in blocks match once the $(13a)$th and $(13a+1)$th strings are different for $a = 1,2$ and $3$. Proposition \ref{prop:matchposition} suggests that this takes about $\log_2(n)+O(1)$ steps.
\item The bridge hands distributed mod $4$ match once the entire deck matches.
\item The parity of the permutation matches once the entire deck matches. As before, this is an awful bound.
\item The card after the $1$ matches once both the string assigned to the $1$ and the next string are distinct from all others.
\item The relative order of the $1$ and $2$ match as soon as they are assigned different strings. \begin{Proposition} \label{prop:rel2}The mixing time $\tme$ of the relative order of the $1$ and the $2$ is $-\log_2(\eps)$\end{Proposition}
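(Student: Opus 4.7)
The plan is to apply Proposition \ref{prop:coupling2} to the standard coupling for inverse riffle shuffles described at the start of Section \ref{sec:inverserifflestats}, in which each card label is assigned the same bit in both decks at each step. The payoff is that the relative order of cards $1$ and $2$ in a deck, after sorting by the assigned strings with ties broken by original order, depends \emph{only} on the strings assigned to cards $1$ and $2$ themselves, and not on any of the other $n-2$ strings. So the two coupled copies agree on this statistic as soon as the strings assigned to $1$ and $2$ differ in at least one position; on that event both decks put the card with the lexicographically smaller string first, regardless of the initial configurations.

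Next I would compute the probability that the coupling has not succeeded by time $t$. Since at each step the bit assigned to card $1$ is independent of the bit assigned to card $2$ and each is uniform on $\{0,1\}$, the probability that these two bits agree is $\tfrac{1}{2}$, independently across steps. Hence the probability that the two strings of length $t$ are identical is exactly $2^{-t}$, and this is the probability that the coupling has failed for this statistic. Note that here we do not need to worry about couplings being destroyed, since once any bit disagrees between the strings of $1$ and $2$, the relative order between them is permanently determined and identical in both chains.

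Applying Proposition \ref{prop:coupling2} with $p = 1 - 2^{-t}$ gives that the total variation distance between the distribution of the relative order of $1$ and $2$ at time $t$ and its stationary distribution is at most $2^{-t}$. Setting $2^{-t} \leq \eps$ and solving for $t$ yields $\tme \leq -\log_2(\eps)$, as claimed.

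There is no real obstacle here; the only thing to notice is the clean reduction to a two-card problem. Compared to the previous propositions in this section, where the relevant event involved a card's string being distinct from all $n-1$ others (and therefore brought in a union bound or a first-moment computation via Proposition \ref{prop:matchcard2}), here we need only a single pair of strings to disagree, which eliminates the $\log_2(n)$ term entirely.
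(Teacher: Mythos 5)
Your proof is correct and is essentially the paper's own argument, just spelled out in more detail: you identify that the relative order of cards $1$ and $2$ depends only on the comparison of their two assigned strings (with ties broken by original order), observe that the coupling makes these strings identical across the two decks, and note that the coupling succeeds as soon as the two strings first differ, which happens after a $\G(\tfrac{1}{2})$ number of steps, giving failure probability $2^{-t}$. The explicit remark that the statistic is determined by just two of the $n$ strings is a nice clarification, but it is the same reduction the paper uses.
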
 \begin{proof}The relative order of the $1$ and $2$ matches once they are assigned different strings. This takes $\G(\frac{1}{2})$ steps.\end{proof}
\item The relative order of the $1$ through $k$ match as soon as they are all assigned different strings.\begin{Proposition} \label{prop:relk}The mixing time $\tme$ of the relative order of the $1$ through $k$ is $2\log_2(k)-\log_2(\eps)$\end{Proposition}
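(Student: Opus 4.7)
The plan is to apply Proposition \ref{prop:coupling2} with the standard coupling used throughout this section (matching labels receive matching bits in both decks), and control the coupling time by a union bound based on Proposition \ref{prop:matchcard2}.

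First I would verify the matching criterion: once the $k$ cards labelled $1, \ldots, k$ have been assigned pairwise distinct strings, their relative order in each deck is determined entirely by the lexicographic order of those strings, since the sort is by the assigned strings (and ties among these $k$ cards are impossible by assumption, so the tie-breaking rule on the initial order plays no role among them). Because the coupling assigns the same string to a given label in each deck, the same relative order is produced in both copies. Thus the statistic is coupled once all $k$ of these strings are distinct.

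Next I would bound the probability that some pair among these $k$ cards still shares a string after $t$ steps. Apply Proposition \ref{prop:matchcard2} with $A$ equal to the set of all $\binom{k}{2}$ pairs from $\{1, \ldots, k\}$: the expected number of such pairs with matching strings after $t$ steps equals $\binom{k}{2}/2^t \leq k^2/2^{t+1}$. By Markov's inequality, the probability that at least one pair still matches is at most this expectation.

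Setting $t = 2\log_2(k) - \log_2(\eps)$ gives $k^2/2^{t+1} = \eps/2 < \eps$, so at this time the coupling has succeeded on the relative-order statistic with probability at least $1-\eps$. Proposition \ref{prop:coupling2} then yields the required total-variation bound. The only mild subtlety is the first step, namely that we really only need the strings among the distinguished $k$ cards to be distinct from one another (not from the remaining $n-k$ cards), which is what gives a bound depending only on $k$ rather than on $n$; everything else is a routine Markov-inequality estimate.
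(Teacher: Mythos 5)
Your proof is correct and takes exactly the approach the paper intends: the paper's proof consists of the same two observations (relative order is coupled once the $k$ distinguished labels have pairwise distinct strings, then invoke Proposition \ref{prop:matchcard2}), only stated without the Markov-inequality calculation that you have filled in. Your expansion is the right reading of the paper's terse argument.
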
 \begin{proof}The relative order of these cards match once they are assigned different strings. Proposition \ref{prop:matchcard2} gives the result.\end{proof}
\end{enumerate}
\end{Example}

Each of these statistics regarding the inverse riffle shuffle process may be translated to the forwards riffle shuffle process. Typically, this interchanges the roles of card positions and card labels, as this is the difference between left-multiplication and right-multiplication in the symmtetric group. We do not give details of these translations here, but they may be found in Section 7.2.3 of \cite{GWThesis}.

\subsection{Random walk on the hypercube}
\label{sec:hypercube}

\begin{Example}
\label{ex:hypercubecoupling}
Consider the lazy nearest-neighbour walk on the hypercube described in Example \ref{ex:hypercube}. Given two instances of this walk in arbitrary initial states, they may be coupled as follows, as in \cite{aldous1983}:
\begin{Coupling}
\label{cou:hypercube}
At each step, choose a position $i$ and a random bit $x$, either $0$ or $1$. In each chain, change the value of the bit in position $i$ to $x$.
\end{Coupling}

The two chains will be in the same state once every position has been chosen at least once. The time taken until this happens is an instance of the coupon collector problem --- approximately $n\log(n)$ steps are required.
\end{Example}

Consider what this coupling says about some statistics on the hypercube using Proposition \ref{prop:coupling2}.

\begin{enumerate}
\item The value of the first bit (or the $k$th bit) matches once that bit is chosen, which takes on average $n$ steps.  
\item The number of `1's matches once every bit has been chosen. 
\item The position of the first `1' matches if there is some $k$ so that the first $k$ bits have all been chosen and for at least one of them, the last time it was chosen, it was set to 1.

Hence once the first $k$ bits have all been chosen, there is a probability of at least $(1-2^{-k})$ that the position of the first `1' matches. For example, if to find a time by which there is at least a $\frac{15}{16}$ chance that the position of the first `1' matches, consider the time taken until the first five (not four) bits have been chosen, which, by coupon collector theory, has expectation less than $\frac{7}{3}n$ and standard deviation less than about $\frac{4}{3}n$. 

So after time $t = \frac{7}{3}n + 6\cdot\frac{4}{3}n = \frac{31}{3}n$, Chebyshev's inequality says that there is at least a $\frac{35}{36}$ chance that the first five bits have all been chosen. There is a $\frac{31}{32}$ chance that they were set to something other than all zeros, so the chance that the first `1' matches after time $\frac{31}{3}n$ is at least $1 - \frac{1}{32} - \frac{1}{36} > \frac{15}{16}$. 

This statistic is another example of one where the coupling can create a match and then destroy it, as opposed to `nicer' statistics, where matches, once created, endure forever.
\end{enumerate}

\subsection{Random transpositions on \texorpdfstring{$S_n$}{the symmetric group}}

Consider the shuffling scheme on a deck of $n$ cards where at each step, two cards are chosen uniformly at random and interchanged. Choosing the same card twice is allowed, and in this case the order of the deck is left unchanged. Equivalently, this is the random walk on $S_n$ generated by the set of all $\binom{n}{2}$ transpositions, along with $n$ copies of the identity.

It will be more convenient to describe the moves slightly differently. Define $$a_{i,j} = \text{``swap the card with label $i$ with the card in position $j$''}.$$ The random transposition walk is equivalently described by choosing $i$ and $j$ uniformly between $1$ and $n$ and then applying $a_{i,j}$.

Two copies of this walk may be coupled, following \cite{AF}

\begin{Coupling}
\label{cou:randomtranspositions}
\hfill
\begin{itemize}
\item Choose $i$ and $j$ uniformly, $1 \leq i,j \leq n$.
\item In each chain, apply $a_{i,j}$
\end{itemize}
\end{Coupling}

To analyse this coupling, define a `match' to be a card which is in the same position in both decks. Observe that the number of matches does not decrease, and increases whenever neither the cards of label $i$ nor the cards in position $j$ presently match. According to this coupling, it takes approximately $n^2$ steps to couple the two chains. This shuffle actually mixes in $\f12n\log(n)$ steps (see \cite{randomtranspositions}), but no Markovian coupling can give this bound (consider two decks whose orders differ by a single transposition, and note that there's only a $\frac{2}{n^2}$ chance that they move to the same state, however they are coupled). See \cite{bormashenko2011coupling} for an amazing (non-Markovian) coupling for the random transposition walk, and a description of related problems.

It will soon be convenient to have some slight variants on this coupling. The previous coupling has the property that if the cards labelled by $k$ match in the two decks, then this match cannot be destroyed by choosing $i=k$, but can be by choosing $j=k$, in which case the match is replaced by the two cards labelled by $i$ matching instead. The analysis of some statistics will be easier if the coupling is edited so that matches are never destroyed. 

This does not represent any great change in what's going on --- there is a possibility that the cards labelled by $k$ match, and then this match is broken and replaced by the cards $i$ matching. This is counterbalanced by some other paths where a different pair of cards matches, but that match is broken and replaced by the cards $k$ matching.

To that end, here is a second coupling for this walk. 

\begin{Coupling}
\label{cou:randomtranspositionslabels}
Define $$a_{i,j} = \text{``swap the card with label $i$ with the card in position $j$''}.$$ and $$b_{i,j} = \text{``swap the card in position $i$ with the card in position $j$''}.$$
\begin{itemize}
\item Choose $i$ and $j$ uniformly, $1 \leq i,j \leq n$.
\item If the cards in position $j$ do not match, then apply $a_{i,j}$ in both chains.
\item If the cards in position $j$ did match, then instead apply $b_{i,j}$ in both chains.
\end{itemize}
\end{Coupling}

To see that this coupling restricts to the original random walk on both instances of the chain, observe that for any fixed $j$, $$\{a_{i,j}\}_{1\leq i\leq n} = \{b_{i,j}\}_{1\leq i\leq n}.$$ Because the decision as to whether to apply $a_{i,j}$ or $b_{i,j}$ depended only on the value of $j$, the coupling does restrict to the random transposition walk on both instances of the chain.

The analysis of this new coupling is exactly the same as the old --- the number of matches never decreases, and increases by one whenever neither the cards of label $i$ nor the cards in position $j$ currently match. However, it has the property that individual matches are never destroyed, while the previous coupling would destroy matches and replace them by others.

This modification ensured that once the cards labelled by $k$ matched, they would continue to match, albeit possibly in different positions. Alternatively, it could have been defined so that once there was a match in position $k$, there would continue to be a match in that position, although potentially of cards of a different value. 

To do this, here is a third coupling. 

\begin{Coupling}
\label{cou:randomtranspositionspositions}
Define $$a_{i,j} = \text{``swap the card with label $i$ with the card in position $j$''}.$$ and $$c_{i,j} = \text{``swap the card with label $i$ with the card with label $j$''}.$$
\begin{itemize}
\item Choose $i$ and $j$ uniformly, $1 \leq i,j \leq n$.
\item If the cards of value $i$ do not match, then apply $a_{i,j}$ in both chains.
\item If the cards of value $i$ did match, then instead apply $c_{i,j}$ in both chains.
\end{itemize}
\end{Coupling}

As in the previous case, to see that this coupling restricts to the original random walk on both instances of the chain, note that for any fixed $i$, $$\{a_{i,j}\}_{1\leq j\leq n} = \{c_{i,j}\}_{1\leq j\leq n},$$ and the decision as to whether to apply $a_{i,j}$ or $c_{i,j}$ depended only on the value of $i$.

The analysis of this coupling is the same as the others. The number of matches never decreases. Matches will stay in the same position, but may change in value.

It is also possible to make only part of this variation: 

\begin{Coupling}
\label{cou:randomtranspositionsposition1}
Define $$a_{i,j} = \text{``swap the card with label $i$ with the card in position $j$''}.$$ and $$c_{i,j} = \text{``swap the card with label $i$ with the card with label $j$''}.$$
\begin{itemize}
\item Choose $i$ and $j$ uniformly, $1 \leq i,j \leq n$.
\item If the cards of value $i$ match and are in position 1, then apply $c_{i,j}$ in both chains.
\item Otherwise apply $a_{i,j}$ in both chains.
\end{itemize}
\end{Coupling}

This coupling has the property that the number of matches never decreases, and that once there is a match in position $1$, there will always be a match in position $1$.

These couplings may be used to examine the convergence of some statistics. Appendix B of \cite{ADS} computes similar results for the mixing of the position of a single card and the positions of half of the cards, or equivalently, the card in a certain position and the values of the cards in a certain half of the positions. 

\begin{enumerate}
\item The top card. 
\begin{Proposition}
\label{prop:rtcardtop}
The mixing time $\tme$ for the top card is at most $T$, defined by $\Pr(\G(\frac{1}{n}) > T) \leq \eps$.
\end{Proposition}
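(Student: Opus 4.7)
The plan is to apply Proposition \ref{prop:coupling2} to Coupling \ref{cou:randomtranspositionsposition1}, using as the relevant event the existence of a match in position $1$. The key observation is that this particular coupling was designed so that a match at position $1$, once created, is preserved forever, and that such a match is guaranteed to be created whenever $j=1$ is selected, regardless of anything else about the current configurations.

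More precisely, I would first verify the preservation property. Suppose that at some time, both chains have the same label $k$ at the top. If the uniformly chosen index $i$ equals $k$, then cards of value $i$ match and are in position $1$, so by the definition of Coupling \ref{cou:randomtranspositionsposition1} we apply $c_{i,j}$ in both chains; this swaps the labels $i$ and $j$, and in particular both chains now have label $j$ at the top, so position $1$ remains matched. If $i\neq k$, then we apply $a_{i,j}$; since label $i$ is not at position $1$ in either chain (the top label is $k$ in both), moving label $i$ to position $j$ does not touch position $1$, so the match is preserved. Either way, once position $1$ is matched it stays matched.

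Next I would note that the event $\{j=1\}$ alone suffices to create a match at the top. Whenever this happens, both chains apply $a_{i,1}$, placing label $i$ at position $1$ in each chain, so position $1$ becomes matched (whether or not it was matched before, and whether the step falls under the $a_{i,j}$ or $c_{i,j}$ branch is irrelevant because $c_{i,j}$ would similarly have produced matching positions). Since $j$ is chosen uniformly from $\{1,\dots,n\}$ independently at each step, the first time $t$ at which $j=1$ is chosen is distributed as $\G(1/n)$, and by the preservation step above, the top cards of the two chains agree from that time onwards.

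Combining these two facts, if $T$ is chosen so that $\Pr(\G(1/n) > T) \leq \eps$, then for any pair of initial states the probability that $f(X_T)=f(Y_T)$, where $f$ is the label of the top card, is at least $1-\eps$. Proposition \ref{prop:coupling2} then yields the stated bound on $\tme$. The only potentially subtle step is the preservation check, which requires handling the two branches of the coupling; but because the branching condition depends only on whether the matched card $i$ sits at position $1$, the casework is short and no additional obstacle arises.
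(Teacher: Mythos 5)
Your argument is correct and follows the paper's approach exactly: both use Coupling \ref{cou:randomtranspositionsposition1} and the observations that choosing $j=1$ creates a match at position~$1$ and that such matches persist, giving a coupling time bounded above by $\G(\frac{1}{n})$ and hence the stated bound via Proposition \ref{prop:coupling2}. One small imprecision to note: in your $i\neq k$ preservation case, the assertion that applying $a_{i,j}$ ``does not touch position $1$'' is false when $j=1$, but your separate treatment of the event $\{j=1\}$ shows that position $1$ is still matched (in fact re-matched with value $i$) in that subcase, so the argument as a whole is sound.
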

\begin{proof}
Consider Coupling \ref{cou:randomtranspositionsposition1}. At each step, if the cards in position 1 do not match, there is a $\frac{1}{n}$ chance of this happening, by choosing $j=1$ and any $i$. Once the cards in position 1 do match, this will remain true, though the matching values may change. Hence the coupling time is $\G(\frac{1}{n})$. This completes the proof.
\end{proof}

Coupling \ref{cou:randomtranspositionsposition1} was used for this purpose, because Couplings \ref{cou:randomtranspositions} and \ref{cou:randomtranspositionslabels} do not preserve matches in position $1$, while Coupling \ref{cou:randomtranspositionspositions} will attempt to preserve matches in other positions, which can increase the time taken to create a match in position $1$.
\item The mixing time for the $k$th card is the same as that of the top card. If Coupling \ref{cou:randomtranspositionsposition1} is changed to preserve matches in position $k$ rather than position $1$, then this is exactly the same as the previous example. That is, it takes $\G(\frac{1}{n})$ steps.
\item The top two cards. 
\begin{Proposition}
The mixing time $\tme$ for the top two cards is at most $T$, defined by $\Pr(\G(\frac{1}{n})+\G(\frac{n-1}{n^2}) > T) \leq \eps$.
\end{Proposition}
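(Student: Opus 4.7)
The plan is to define a coupling that extends Coupling \ref{cou:randomtranspositionsposition1} to preserve matches at both position $1$ and position $2$, then to decompose the coupling time into the sum of two geometric waiting times. Concretely, at each step draw $i, j$ uniformly from $\{1,\dots,n\}$; if the cards of value $i$ currently match and occupy position $1$ or position $2$, apply $c_{i,j}$ in both chains, otherwise apply $a_{i,j}$. As for Coupling \ref{cou:randomtranspositionsposition1}, the identities $\{a_{i,j}\}_{1\leq j\leq n} = \{c_{i,j}\}_{1\leq j\leq n}$ together with the fact that the choice depends only on $i$ imply that each marginal chain is still a random transposition walk.

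I would then verify by case analysis that matches at positions $1$ and $2$ are never destroyed once created. Suppose position $1$ holds card $k_1$ in both decks. If $i = k_1$ then $c_{k_1, j}$ is applied, which swaps $k_1$ with the card of label $j$ in each deck, placing $j$ at position $1$ of both decks. If $i \neq k_1$, then $a_{i, j}$ is applied, which leaves position $1$ alone unless $j=1$, in which case card $i$ lands at position $1$ in both decks and the match is merely relabeled. Position $2$ is handled symmetrically, and the standard monotonicity argument used for Coupling \ref{cou:randomtranspositions} shows that the total number of matches is nondecreasing.

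For the waiting time analysis, observe that until position $1$ is matched, any step with $j = 1$ creates a match there, so the time to obtain a match at position $1$ is stochastically dominated by $\G(\frac{1}{n})$. Once position $1$ is matched by some card $k_1$, choosing $j = 2$ and $i \neq k_1$ triggers the $a$ branch of the rule and applies $a_{i, 2}$, which places card $i$ at position $2$ of both decks; since $i \neq k_1$ the match at position $1$ is untouched. This event has probability $\frac{n-1}{n^2}$ per step, so the additional waiting time is dominated by $\G(\frac{n-1}{n^2})$. The strong Markov property then shows that the total coupling time is stochastically dominated by the independent sum $\G(\frac{1}{n}) + \G(\frac{n-1}{n^2})$, and Proposition \ref{prop:coupling2} applied to $f$ equal to the ordered top two cards yields the claimed bound.

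The main technical obstacle is the case analysis for preservation once both matches exist (with cards $k_1$ at position $1$ and $k_2$ at position $2$). The delicate case is $i, j \in \{k_1, k_2\}$: for instance, $c_{k_1, k_2}$ swaps $k_1$ and $k_2$ in both decks, so $k_2$ lands at position $1$ and $k_1$ lands at position $2$, and both matches are preserved (with swapped values). All remaining subcases reduce to the analysis above.
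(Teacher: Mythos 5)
Your decomposition differs from the paper's, and it introduces a gap. The paper's proof first waits for a match to appear at \emph{either} of positions $1$ or $2$: while neither position is matched, the $c$-branch of the coupling cannot trigger for $j\in\{1,2\}$, so every step with $j\in\{1,2\}$ creates a match there via $a_{i,j}$ regardless of $i$, giving a waiting time $\G(\frac{2}{n})$ for the first of the two positions. It then waits $\G(\frac{n-1}{n^2})$ for the second. You instead wait for position $1$ specifically and claim that this waiting time is dominated by $\G(\frac{1}{n})$ because ``until position $1$ is matched, any step with $j=1$ creates a match there.'' That claim fails once position $2$ becomes matched, say by value $k_2$: the step $(i,j)=(k_2,1)$ then triggers the $c$-branch of your coupling and applies $c_{k_2,1}$, which relabels the match at position $2$ and leaves position $1$ untouched. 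Conditional on position $2$ having matched first, the per-step probability of matching position $1$ drops to $\frac{n-1}{n^2}$, so the time for position $1$ to match is not stochastically dominated by $\G(\frac{1}{n})$.

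The paper's ordering of the two stages is chosen precisely so that no such interaction occurs: during the first stage there is no existing match at either of the two positions and hence no bad value of $i$ to avoid; during the second stage the one existing match is preserved by construction, so only the single excluded value $i=k_1$ matters, giving the $\frac{n-1}{n^2}$ rate cleanly. The remainder of your argument --- the verification that the modified coupling restricts to the random transposition walk in each marginal, the case analysis showing matches at positions $1$ and $2$ are never destroyed, and the appeal to Proposition~\ref{prop:coupling2} --- is correct and matches the paper. To repair the proof, simply replace your first stage with the paper's: bound the time until the first of the two positions matches by $\G(\frac{2}{n})$, which dominates the stated $\G(\frac{1}{n})$ term a fortiori.
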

\begin{proof}For this statistic, vary Coupling \ref{cou:randomtranspositionsposition1} to preserve matches in either of the top two positions. Then while there are no matches in positions $1$ or $2$, each step has a chance of $\frac{2}{n}$ to create one, by choosing $j=1$ or $j=2$, and any $i$. Once there is a match in either of these positions, each step has a chance of $\frac{1}{n}\frac{n-1}{n}$ of creating a match in the other position --- by choosing $j$ to be the other of $\{1,2\}$ and $i$ to be anything but the value involved in the existing match. Therefore the coupling time is $\G(\frac{1}{n})+\G(\frac{n-1}{n^2})$.\end{proof} 
\item Any two cards. As was the case for attempting to match the card in a single position, the previous argument did not rely on the positions chosen, so the time until there are matches in any two positions is the same. 
\item The cards in any $k$ positions. \begin{Proposition}
\label{prop:anykcards}
The coupling time for the cards in any $k$ positions to match is \[T = \G(\frac{k}{n})+\G(\frac{(k-1)(n-1)}{n^2})+\dots+\G(\frac{n-k+1}{n^2}).\]
\end{Proposition}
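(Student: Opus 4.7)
The plan is to adapt Coupling \ref{cou:randomtranspositionsposition1} by protecting matches at any of the $k$ specified positions rather than only position~$1$: if the cards of value $i$ match in both decks and happen to sit in one of the $k$ \emph{protected} positions, apply $c_{i,j}$ in both chains; otherwise apply $a_{i,j}$. The identity $\{a_{i,j}\}_{j=1}^{n}=\{c_{i,j}\}_{j=1}^{n}$ used to justify the earlier variants shows this is a valid coupling, because the decision rule depends only on $i$.

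First I would verify that the number of matches at protected positions is non-decreasing. In Case~1 (where $a_{i,j}$ is applied), the only positions whose values change are $j$ and the two positions $p_A^i,p_B^i$ holding value $i$; a pre-existing match at $j$ simply migrates from its old value $u$ to $i$ and survives, while a matched protected position of value $i$ would have triggered Case~2, a contradiction. In Case~2 (where $c_{i,j}$ is applied), the match of value $i$ at its protected position turns into a match of value $j$ at the same position, and a symmetric check rules out the destruction of other protected matches.

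Next I would lower bound the probability of gaining a match from $m$ to $m+1$. The event ``cards of value $i$ do not match at any protected position'' (Case~1) holds with probability $(n-m)/n$, since only the $m$ matched values can force Case~2. Independently, $j$ falls into one of the $k-m$ currently unmatched protected positions with probability $(k-m)/n$. When both occur, $a_{i,j}$ places value $i$ at position $j$ in both decks, producing a brand-new match at $j$. Since $i$ and $j$ are chosen independently, the per-step success probability is at least $(k-m)(n-m)/n^2$, and for $m=0,1,\dots,k-1$ this yields exactly the terms $k/n$, $(k-1)(n-1)/n^2$, \dots, $(n-k+1)/n^2$ appearing in the stated $T$.

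Combining monotonicity with the per-step bound, the waiting time to pass from $m$ to $m+1$ matches is stochastically dominated by $\G((k-m)(n-m)/n^2)$, so the coupling time is stochastically dominated by $T$; the conclusion then follows from Proposition~\ref{prop:coupling2}. The main obstacle I expect is the bookkeeping in Case~2: one must confirm that shifting the match at the protected position $p$ from value $i$ to value $j$ cannot inadvertently destroy another protected match. The key observation is that either value $j$ also currently matches at some protected position $q$, in which case $c_{i,j}$ leaves both $p$ and $q$ matched with their values swapped, or value $j$ is unmatched, in which case the two positions holding $j$ differ between the decks and no pre-existing match can be broken.
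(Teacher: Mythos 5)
Your proposal takes exactly the approach the paper intends: Coupling \ref{cou:randomtranspositionsposition1} is generalized to protect matches at all $k$ chosen positions, monotonicity of the number of protected matches is checked, and the success probability at level $m$ is bounded below by $(k-m)(n-m)/n^2$, which yields the stated geometric sum; the paper's proof is just a one-line pointer to this same construction, so your write-up is a faithful fleshing-out of it. One tiny slip in your final paragraph: the dichotomy ``value $j$ matches at a protected position'' versus ``value $j$ is unmatched'' omits the case where value $j$ matches at a \emph{non-protected} position, but that case is harmless --- $c_{i,j}$ turns that match into a match of value $i$ at the same (non-protected) spot, so no protected match is lost and the conclusion is unaffected.
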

\begin{proof}
Use a variation of Coupling \ref{cou:randomtranspositionsposition1} which preserves matches in the relevant positions.
\end{proof}
For example, to match the cards in any $\frac{3n}{4}$ positions takes on average time 
\begin{align*}
& \frac{n}{\frac{3n}{4}} + \frac{n}{\frac{3n}{4}-1}\frac{n}{n-1} + \dots + n\frac{n}{n+1-\frac{3n}{4}} \\
&\leq 4n(1 + \frac{1}{2} + \frac{1}{3} + \dots + \frac{1}{\frac{3n}{4}}) \\
&\approx 4n\log(\frac{3n}{4}) \\
&\approx 4n\log(n)
\end{align*}
The interesting point here is not Proposition \ref{prop:anykcards} itself --- this bound is still larger than the true value of the mixing time of order of the entire deck, obtained by other means. Rather, it is that a coupling which gives a mixing time of the chain too large by a factor of $n$ can give the correct order of the mixing time of a fairly large portion of the deck.
\item The position of the card labelled by $k$. \begin{Proposition}
The mixing time $\tme$ for the location of the $1$ is at most $T$, defined by $\Pr(\G(\frac{1}{n}) > T) \leq \eps$.
\end{Proposition}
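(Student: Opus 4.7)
The plan is to imitate the proof of the top-card proposition (Proposition \ref{prop:rtcardtop}) by constructing a coupling that creates a label-$1$ match with probability $\frac{1}{n}$ per step and preserves such a match once it exists. The natural analogue of Coupling \ref{cou:randomtranspositionsposition1} should single out label $1$ rather than position $1$, and I would assemble it by combining the existing couplings in a state-dependent way rather than defining a new one from scratch.

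First I would define the coupling as follows. If the cards labelled $1$ sit at different positions in the two decks, use Coupling \ref{cou:randomtranspositions}: choose $i,j$ uniformly and apply $a_{i,j}$ in both decks. If the cards labelled $1$ sit at the same position, switch to Coupling \ref{cou:randomtranspositionslabels}. In the first regime, whenever the uniform choice $i=1$ occurs (probability $\frac{1}{n}$ per step, regardless of $j$), the move $a_{1,j}$ sends card $1$ to position $j$ in both decks simultaneously, so label $1$ becomes matched. In the second regime, Coupling \ref{cou:randomtranspositionslabels} preserves all label matches (as established in the text), so the match for label $1$, though its position may change over time, is never broken.

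Next I would verify that this state-dependent scheme is a valid Markovian coupling: each of Coupling \ref{cou:randomtranspositions} and Coupling \ref{cou:randomtranspositionslabels} has the correct marginal distribution on each deck conditional on the joint state, so any state-dependent switch between them does as well. The coupling time for the position of card $1$ is therefore stochastically dominated by the first time the choice $i=1$ is made, which is a $\G\!\left(\frac{1}{n}\right)$ random variable. Applying Proposition \ref{prop:coupling2} then yields $\tme \le T$ with $T$ defined by $\Pr(\G(\frac{1}{n}) > T) \le \eps$.

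The main obstacle, more a subtlety than an actual difficulty, is justifying why Coupling \ref{cou:randomtranspositionslabels} alone does not already deliver the bound: when $i=1$ but position $j$ happens to already match, the coupling applies $b_{1,j}$ rather than $a_{1,j}$, and $b_{1,j}$ need not produce a label-$1$ match. The state-dependent switch above sidesteps this by using the unmodified Coupling \ref{cou:randomtranspositions} precisely in the phase where we need every $i=1$ choice to create the match.
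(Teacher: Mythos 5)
Your proposal is correct and is essentially the same argument the paper gives: the paper modifies Coupling \ref{cou:randomtranspositionslabels} directly (applying $b_{i,j}$ only when the match at position $j$ is of label $1$), whereas you achieve the same two properties --- a $\frac{1}{n}$ per-step chance of matching card $1$ while unmatched, and permanence of the match thereafter --- by switching state-dependently between Coupling \ref{cou:randomtranspositions} and Coupling \ref{cou:randomtranspositionslabels}, which is a cosmetic difference. Your closing observation about why Coupling \ref{cou:randomtranspositionslabels} alone does not immediately give the bound (the $b_{1,j}$ case when position $j$ already matches) is a real subtlety that the paper's one-line proof leaves implicit.
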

\begin{proof} In the same way that Coupling \ref{cou:randomtranspositionspositions} was modified to create Coupling \ref{cou:randomtranspositionsposition1}, Coupling \ref{cou:randomtranspositionslabels} may be modified to preserve matches only when the matching label is $1$. Then the proof is the same as that of Proposition \ref{prop:rtcardtop}, using positions rather than values. \end{proof}
\item The time for the positions of any $k$ cards to match is the same as in Proposition \ref{prop:anykcards}, but again working with positions rather than values.
\end{enumerate}

\subsection{Glauber dynamics for graph colourings}

Aldous and Fill in \cite{AF} present a coupling for a random walk on graph colourings. Consider a graph $G$ with $n$ vertices and maximal degree $r$, and a set of $c$ colours. A \emph{graph colouring} is an assignment of a colour to each vertex of the graph so that no two vertices of the same colour are connected by an edge.

In \cite{AF}, a coupling is used to show that if $c > 4r$ then the mixing time is bounded above by approximately $\frac{cn}{c-4r}\log(n)$. If the statistic of interest is the set of vertices of any given colour, then this coupling may be modified to show that this statistic mixes in approximately $\frac{cn}{c-3r}\log(n)$ steps. When $c$ is close to $4r$, this is significantly smaller. 

This is another situation where these results are interesting only in contrast to one another --- techniques other than coupling give better bounds.

\section{Further work}

We have discussed how coupling and strong stationary times may be used to give bounds for the convergence of statistics of a Markov chain. It would be useful to be able to go in the reverse direction.

\begin{Question}
Given bounds on the convergence of a suitably large collection of statistics on a Markov chain, is it possible to obtain bounds on the convergence of the Markov chain itself? 
\end{Question}

The examples of Section 7 of \cite{GWmutations} may be seen as examples where this is possible --- Propositions 30 and 31 of that section may be understood as making rigorous the heuristic that a deck of cards is mixed once each card is in a random position, which in those examples takes $n^3$ steps for any card, and a factor of $\log(n)$ because each card individually must have achieved this. 

Of course, care is needed here. Repeatedly applying powers of a single $n$--cycle will randomise the position of each card, but will certainly not result in a shuffled deck, because the positions of each card will be perfectly correlated with one another. A physical example of this is cutting a deck and placing the bottom portion on top. Regardless of how many and which cuts are made, the order of the deck is preserved up to cycling.

For some of the statistics considered in the present paper, a coupling immediately gave a good bound. For others, like the parity of the permutation, the bound was terrible, and a different argument was necessary.

\begin{Question}
Given a coupling or strong stationary time that gives good bounds for the convergence of a Markov chain, is it possible to predict for which statistics it will give good or bad bounds? How can better couplings or strong stationary times be designed for some statistics?
\end{Question}

\bibliographystyle{plain}
\bibliography{bib}
\end{document}